\definecolor{lightblue}{rgb}{.85,.93,1}
\newcommand{\be}{\begin{equation*}}
\newcommand{\ee}{\end{equation*}}
\newcommand{\ba}{\begin{eqnarray*}}
\newcommand{\ea}{\end{eqnarray*}}
\newcommand{\rn}{\mathbb{R}^d}
\newcommand{\Q}{|Q|}
\newcommand{\abs}[1]{\lvert#1\rvert}
\theoremstyle{plain}
\newtheorem{thm}{Theorem}[section]
\newtheorem{lemma}[thm]{Lemma}
\theoremstyle{definition}
\newtheorem*{notation}{Notation}
\theoremstyle{definition}
\theoremstyle{remark}
\newtheorem{remark}{Remark}[section]
\newtheorem*{claim}{Claim}
\numberwithin{equation}{section}
\newcommand{\normllrp}[1]{\lvert#1\rvert_{r\prime}}
\newcommand{\norm}[1]{\lVert#1\rVert}
\newcommand{\chf}[1]{\textup{ch}_{\mathcal{F}}(#1)}
\newcommand{\chsf}[1]{\textup{ch}^*_{\mathcal{F}}(#1)}
\newcommand{\chg}[1]{\textup{ch}_{\mathcal{G}}(#1)}
\newcommand{\chsg}[1]{\textup{ch}^*_{\mathcal{G}}(#1)}
\newcommand{\eg}[1]{E_{\mathcal{G}}(#1)}
\newcommand{\ef}[1]{E_{\mathcal{F}}(#1)}
\newcommand{\pif}[1]{\pi_\mathcal{F}(#1)}
\newcommand{\pig}[1]{\pi_\mathcal{G}(#1)}
\newcommand{\av}[1]{\langle #1\rangle}
\newcommand{\avfs}{\langle f\rangle^\sigma}
\newcommand{\avog}{\langle \normllrp{g}\rangle^\omega}
\newcommand{\avo}[1]{\langle #1\rangle^\omega}
\newcommand{\avs}[1]{\langle #1\rangle^\sigma}
\newcommand{\avgg}{\langle \normllrp{g}\rangle^\omega_G\;}
\newcommand{\avon}[1]{\langle \normllrp{#1}\rangle^\omega}
\newcommand{\dc}{\mathcal{D}}
\newcommand{\pairo}[2]{\langle #1,#2 \rangle_{L^p_{\ell^r}(\omega)\times L^{p'}_{\ell^{r'}}(\omega)}}
\newcommand{\pairorealvalued}[2]{\langle #1,#2 \rangle_{L^p(\omega)\times L^{p'}(\omega)}}
\newcommand{\pairs}[2]{\langle #1,#2 \rangle_{L^{p}(\sigma)\times L^{p'}(\sigma)}}
\newcommand{\normlps}[1]{\lVert#1\rVert_{L^p(\sigma)}}
\newcommand{\normlppo}[1]{\lVert#1\rVert_{L^{p'}_{\ell^{r'}}(\omega)}}
\newcommand{\cf}{\mathcal{F}}
\newcommand{\ci}{\mathcal{I}}
\newcommand{\cg}{\mathcal{G}}
\newcommand{\cd}{\mathcal{D}}
\newcommand{\normrp}[1]{\lvert#1\rvert_{r'}}
\newcommand{\normr}[1]{\lvert#1\rvert_{r}}
\begin{document}

\date{\today}
\subjclass[2010]{42B20,46E40}
     
\keywords{weight,vector-valued,testing condition}

\title[Another proof of Scurry's two weight norm inequality]{Another proof of Scurry's characterization of a two weight norm inequality for a sequence-valued positive dyadic operator}

\author{Timo S. H\"anninen}
\address{Department of Mathematics and Statistics, University of Helsinki, P.O. Box 68, FI-00014 HELSINKI, FINLAND}
\thanks{The author is supported by the European Union through the ERC Starting Grant "Analytic-probabilistic methods for borderline singular integrals". }
\email{timo.s.hanninen@helsinki.fi}

\begin{abstract}
In this note we prove Scurry's testing conditions for the boundedness of a sequence-valued averaging positive dyadic operator from a weighted Lp space to a sequence-valued weighted Lp space by using parallel stopping cubes.
\end{abstract}
\maketitle
\tableofcontents
\section{Introduction and statement of the theorem}
Let $\lambda_Q$ be non-negative real numbers indexed by the dyadic cubes $Q\in\dc$ of $\rn$. We define the operator $T$ by
$$
T(f):=(\lambda_Q\av{f}_Q1_Q)_{Q\in\dc}.
$$
Suppose that $1<p<\infty$. Let $u$ and $\omega$ be weights. 
We are considering sufficient and necessary testing conditions for the boundedness of the operator $T:L^p(u)\to L^p_{\ell^r}(\omega)$. By the change of weight $\sigma=u^{-1/(p-1)}$ we may as well study the boundedness of the operator $T(\,\cdot\,\sigma):L^p(\sigma)\to L^p_{\ell^r}(\omega)$. 

In the case $r=\infty$ Sawyer \cite[Theorem A]{sawyer1982} proved that for $\lambda_Q=\abs{Q}^{a/d}$ with $0\leq a <d$ it is sufficient to test the boundedness of the operator $T(\,\cdot\,\sigma):L^p(\sigma)\to L^p_{\ell^\infty}(\omega)$ on functions $f=1_R$ with $R\in\cd$. This testing condition holds for every $\lambda_Q$, as one can check by using the well-known proof in which one linearizes the operator $\abs{Tf}_\infty=\sum_{Q\in\cd}\lambda_Q\av{f}_Q1_{E(Q)}$ by using the partition $$E(Q):=\{x\in Q: \abs{Tf(x)}_\infty=\lambda_Q\av{f}_Q \text{ and } \lambda_{Q'}\av{f}_{Q'}< \lambda_Q\av{f}_Q \text{ whenever } Q'\supsetneq Q\}$$
and applies the dyadic Carleson embedding theorem. The exact statement of the testing condition in the case $r=\infty$ corresponds to Theorem \ref{scurry} with $r=\infty$ and with the dual testing \eqref{dualtesting} omitted. 

In the case $r=1$ the boundedness of the sequence-valued operator $T(\,\cdot\,\sigma):L^p(\sigma)\to L^p_{\ell^1}(\omega)$ is equivalent to the boundedness of the real-valued operator $S(\,\cdot\,\sigma):L^p(\sigma)\to L^p(\omega)$ defined by $$Sf:=\abs{Tf}_1=\sum_{Q\in\cd} \lambda_Q\av{f}_Q1_Q.$$ For the boundedness of $S(\,\cdot\,\sigma):L^p(\sigma)\to L^p(\omega)$ it is sufficient to test the boundedness of both the operator $S(\,\cdot\,\sigma):L^p(\sigma)\to L^p(\omega)$ and its formal adjoint $S(\,\cdot\,\omega):L^{p'}(\omega)\to L^{p'}(\sigma)$ on functions $f=1_R$ with $R\in\cd$. These testing conditions were proven for $p=2$ 
\begin{itemize}
\item by Nazarov, Treil, and Volberg  \cite{nazarov1999} by the Bellman function technique 
\end{itemize}
and for $1<p<\infty$ 
\begin{itemize}
\item by Lacey, Sawyer, and Uriarte-Tuero \cite{lacey2009} by techniques that are similar to the ones that Sawyer \cite{sawyer1988} used in proving such testing conditions for a large class of integral operators $I(\,\cdot\,\sigma):L^p(\sigma)\to L^p(\omega)$ with non-negative kernels (in particular for fractional integrals and Poisson integrals),
\item by Treil \cite{treil2012} by splitting the summation over dyadic cubes $Q\in\cd$ in the dual pairing $\pairorealvalued{Sf}{g}$ by the condition "$\sigma(Q)(\avs{f}_Q)^p>\omega(Q)(\avo{g}_Q)^{p'}"$, 
\item and by  
Hyt\"onen \cite[Section 6]{hytonen2012} by constructing stopping cubes for each of the pairs $(f,\sigma)$ and $(g,\omega)$ in parallel and then splitting the summation in the dual pairing $\pairorealvalued{Sf}{g}$ by the condition "$\pif{Q}\subseteq\pig{Q}$". The technique of organizing the summation by parallel stopping cubes is from the work of Lacey, Sawyer, Shen and Uriarte-Tuero \cite{lacey2012} on the two-weight boundedness of the Hilbert transform. 
\end{itemize}
The exact statement of the testing conditions for the operator $S(\,\cdot\,\sigma):L^p(\sigma)\to L^p(\omega)$ corresponds to Theorem \ref{caserone}, which is equivalent to Theorem \ref{scurry} with $r=1$ for the operator $T(\,\cdot\,\sigma):L^p(\sigma)\to L^p_{\ell^1}(\omega)$, as explained in Remark \ref{operators}.

In the case $1<r<\infty$ the testing conditions in Theorem \ref{scurry} for the boundedness of the operator $T(\,\cdot\,\sigma):L^p(\sigma)\to L^p_{\ell^r}(\omega)$ were first proven by Scurry \cite{scurry2010} by adapting Lacey, Sawyer, and Uriarte-Tuero's proof of the case $r=1$ to the case $1<r<\infty$. In this note we adapt Hyt\"onen's proof of the case $r=1$ to the case $1<r<\infty$.

Next we state Theorem \ref{scurry}. Note that the formal adjoint $T^*:L^{p'}_{\ell^{r'}}\to L^{p'}$ of the operator $T:L^p\to L^p_{\ell^r}$ is given by
$$
T^*(g)=\sum_{Q\in\dc}\lambda_Q\av{g_Q}_Q1_Q.
$$
The operator $T$ is positive in the sense that if $f\geq 0$, then $(Tf)_Q\geq0$ for every $Q\in\cd$. Likewise, the operator $T^*$ is positive in the sense that if $g_Q\geq 0$ for every $Q\in\cd$, then $T^*(g)\geq0$.
For each dyadic cube $R$ we define the localized version $T_R$ of the operator $T$ by
$$
T_R(f):=(\lambda_Q\av{f}_Q1_Q)_{\substack{Q\in\dc:\\Q\subseteq R}}.
$$
Hence the formal adjoint $T_R^*:L^{p'}_{\ell^{r'}}\to L^{p'}$ of the operator $T_R:L^p\to L^p_{\ell^r}$ is given by
$$
T^*_R(g)=\sum_{\substack{Q\in\dc:\\Q\subseteq R}}\lambda_Q\av{g_Q}_Q1_Q.
$$
Note that for the formal adjoint $(T(\,\cdot\,\sigma))^*:L^{p'}_{\ell^{r'}}(\omega)\to L^{p'}(\sigma)$ of the operator $T(\,\cdot\,\sigma):L^p(\sigma)\to L^p_{\ell^r}(\omega)$ we have $(T(\,\cdot\,\sigma))^*=T^*(\,\cdot\,\omega)$.

\begin{thm}\label{scurry}
Let $1\leq r\leq \infty$ and $1<p<\infty$. Suppose that $\sigma$ and $\omega$ are locally integrable positive functions. 
Then the two weight norm inequality
\begin{equation}\label{norminequality}
\norm{T(f\sigma)}_{L^p_{\ell^r}(\omega)}\leq \tilde{C} \norm{f}_{L^p(\sigma)}
\end{equation}
holds if and only if both of the following testing conditions hold
\begin{subequations}
\begin{align}
\norm{T_R(\sigma)}_{L^p_{\ell^r}(\omega)}&\leq C \sigma(R)^{1/p} \quad \text{ for all } R\in\dc\label{testing}\\
\norm{T^*_R(g\,\omega)}_{L^{p'}(\sigma)}&\leq C^* \norm{g}_{L^\infty_{\ell^{r'}}(\omega)} \omega(R)^{1/p'}\quad\text{ for all } g=(a_Q1_Q)_{Q\in\cd} \label{dualtesting}\\
&\phantom{\leq C^* \norm{g}_{L^\infty_{\ell^{r'}}(\omega)} \omega(R)^{1/p'}\quad\text{ for all }g=}\text{ with constants $a_Q\geq0$.} \nonumber
\end{align}
\end{subequations}
Moreover, $\tilde{C}\leq C_{p',r'} 20p\,p'(C+C^*)$, where $C_{p',r'}$ is the constant of Stein's inequality. 
\end{thm}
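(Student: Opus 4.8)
The plan is to obtain the testing conditions by testing on indicators, and the norm inequality by expanding the dual bilinear form and organizing the resulting sum with two stopping families built in parallel, following Hyt\"onen's treatment of the case $r=1$. \emph{Necessity} is quick: applying \eqref{norminequality} to $f=1_R$ and noting that $T_R(\sigma)$ is dominated coordinatewise by $T((1_R)\sigma)$ gives \eqref{testing} with $C\le\tilde C$; dualizing \eqref{norminequality} to $\norm{T^*(g\,\omega)}_{L^{p'}(\sigma)}\le\tilde C\,\norm{g}_{L^{p'}_{\ell^{r'}}(\omega)}$, applying this to $(g_Q1_R)_Q$, and using $T^*_R(g\,\omega)\le T^*((g1_R)\,\omega)$ together with $\norm{(g_Q1_R)_Q}_{L^{p'}_{\ell^{r'}}(\omega)}\le\norm{g}_{L^\infty_{\ell^{r'}}(\omega)}\,\omega(R)^{1/p'}$ gives \eqref{dualtesting} with $C^*\le\tilde C$.

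\emph{Sufficiency.} By duality it suffices to bound $\pairo{T(f\sigma)}{g}=\sum_{Q\in\dc}\lambda_Q\av{f\sigma}_Q\,\avo{g_Q}_Q\,\omega(Q)$, and by positivity I may take $f\ge 0$ and each $g_Q\ge 0$ supported in $Q$. Only the averages $\avo{g_Q}_Q$ enter, so I first replace $g_Q$ by $\avo{g_Q}_Q1_Q$; since this is a conditional expectation, Stein's inequality gives $\norm{(\avo{g_Q}_Q1_Q)_Q}_{L^{p'}_{\ell^{r'}}(\omega)}\le C_{p',r'}\norm{g}_{L^{p'}_{\ell^{r'}}(\omega)}$, reducing the problem --- at the cost of the factor $C_{p',r'}$ --- to $g=(a_Q1_Q)_Q$ with constants $a_Q\ge 0$, the class appearing in \eqref{dualtesting}. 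A monotone-convergence argument reduces to a finite collection of cubes, with bounds independent of it. I then build stopping cubes $\cf$ for $f$ with respect to $\sigma$ (the $\cf$-children of $F$ are the maximal $Q\subsetneq F$ with $\avs{f}_Q>2\avs{f}_F$) and stopping cubes $\cg$ for $\gamma:=\bigl(\sum_Q a_Q^{r'}1_Q\bigr)^{1/r'}$ with respect to $\omega$. Both families are sparse, which yields the $\sigma$-Carleson estimate $\sum_{F\in\cf}(\avs{f}_F)^p\sigma(F)\lesssim(p')^p\norm{f}_{L^p(\sigma)}^p$ and the $\omega$-Carleson estimate $\sum_{G\in\cg}(\avo{\gamma}_G)^{p'}\omega(G)\lesssim p^{p'}\norm{g}_{L^{p'}_{\ell^{r'}}(\omega)}^{p'}$, together with the stability $\avs{f}_Q\le 2\avs{f}_{\pif{Q}}$, $\avo{\gamma}_Q\le 2\avo{\gamma}_{\pig{Q}}$ and the geometric decay of these averages along a stopping chain.

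Next, split
\[
\pairo{T(f\sigma)}{g}=\Sigma_{\mathrm I}+\Sigma_{\mathrm{II}},\qquad
\Sigma_{\mathrm I}:=\sum_{Q:\,\pif{Q}\subseteq\pig{Q}}(\cdots),\qquad
\Sigma_{\mathrm{II}}:=\sum_{Q:\,\pig{Q}\subsetneq\pif{Q}}(\cdots),
\]
with the same summand in each. Because the families are parallel one has $\pig{Q}=\pig{\pif{Q}}$ on $\Sigma_{\mathrm I}$ and $\pif{Q}=\pif{\pig{Q}}$ on $\Sigma_{\mathrm{II}}$, so $\Sigma_{\mathrm I}$ breaks into blocks indexed by $F\in\cf$ and $\Sigma_{\mathrm{II}}$ into blocks indexed by $G\in\cg$. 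For $\Sigma_{\mathrm I}$ I use \eqref{testing}: in the $F$-block I bound $\avs{f}_Q\le 2\avs{f}_F$, pair the coefficients $\lambda_Q\av{\sigma}_Q1_Q$ (a subfamily of $T_F(\sigma)$) against the $a_Q$'s by H\"older in the cube index, note that the $\ell^{r'}$-valued $g$-pieces coming from different blocks are disjoint in the cube index (hence summable in $\ell^{r'}$ to at most $\gamma$), and --- to sum the $f$-parts over the blocks --- pass, after a H\"older with a suitable inner $\ell$-exponent $s\le r$, to $T$ applied to the $\cf$-stopping function $A_\cf f:=\sum_{F\in\cf}\avs{f}_F1_{\ef{F}}$, whose $L^p_{\ell^r}(\omega)$-norm is controlled by \eqref{testing} and the $\sigma$-Carleson estimate; this gives $\Sigma_{\mathrm I}\lesssim p'\,C\,\norm{f}_{L^p(\sigma)}\,\norm{g}_{L^{p'}_{\ell^{r'}}(\omega)}$. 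For $\Sigma_{\mathrm{II}}$ I use \eqref{dualtesting}: rewriting the $G$-block through $T^*$ turns it into $\sum_{G\in\cg}\int_G f\cdot T^*_G(h_G\,\omega)\,d\sigma$, where $h_G$ is the restriction of $(a_Q1_Q)_Q$ to the cubes of the block; these cubes have $\cg$-parent $G$, so $h_G$ is supported in $G$ and its $L^\infty_{\ell^{r'}}(\omega)$-norm is controlled by $\avo{\gamma}_G$, whence \eqref{dualtesting} gives $\norm{T^*_G(h_G\,\omega)}_{L^{p'}(\sigma)}\le C^*\avo{\gamma}_G\,\omega(G)^{1/p'}$; combining with the two Carleson estimates gives $\Sigma_{\mathrm{II}}\lesssim p\,C^*\,\norm{f}_{L^p(\sigma)}\,\norm{g}_{L^{p'}_{\ell^{r'}}(\omega)}$. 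Collecting the numerical constants coming from the stopping doublings and the geometric series, and reinstating the Stein factor, produces $\tilde C\le C_{p',r'}\,20pp'(C+C^*)$.

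\emph{Main obstacle.} The scalar skeleton above --- the two stopping families, the $\Sigma_{\mathrm I}/\Sigma_{\mathrm{II}}$ split, the Carleson embeddings --- is essentially that of the case $r=1$; the work lies in the $\ell^{r}/\ell^{r'}$ bookkeeping. In $\Sigma_{\mathrm I}$ one must not estimate the summed $g$-pieces pointwise in $\ell^{p'}$: the naive inequality $\sum_F\gamma_F^{p'}\le\gamma^{p'}$ fails precisely when $r<p$, since the $a_Q$-mass may be spread over many stopping cubes of a single chain, so the $\ell^{r'}$-disjointness of the pieces has to be played against the testing condition applied to the stopping function rather than against a pointwise $\ell^{p'}$ bound. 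In $\Sigma_{\mathrm{II}}$ the dual difficulty is passing from the $L^{p'}(\omega)$-Carleson quantity $\avo{\gamma}_G$ to the $L^\infty_{\ell^{r'}}(\omega)$-normalization that \eqref{dualtesting} requires of the localized datum $h_G$, which is what forces the $\cg$-stopping to be tailored to the $\ell^{r'}$-structure. Making a single $\cg$-construction serve both halves, while keeping the constants as stated, is the technical heart of the argument; Stein's inequality --- which accounts for the factor $C_{p',r'}$ --- is the device that makes the vector-valued reductions go through.
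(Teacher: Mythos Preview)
Your high-level plan matches the paper: dualize, reduce to piecewise-constant $g=(a_Q1_Q)$ via Stein's inequality (this is where the factor $C_{p',r'}$ enters), build parallel stopping families $\cf$ for $(f,\sigma)$ and $\cg$ for $(\gamma,\omega)$ with $\gamma=\normrp{g}$, and split the bilinear form by $\pif{Q}\subseteq\pig{Q}$ versus $\pig{Q}\subsetneq\pif{Q}$. Your $\cg$-stopping (ordinary averages of $\gamma$) works just as well as the paper's tailored condition $\normrp{(a_Q)_{Q\supseteq G'}}>2\avo{\gamma}_G$, since $\avo{\gamma}_{G'}\ge\normrp{(a_Q)_{Q\supseteq G'}}$ pointwise.

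The genuine gap is your assignment of testing conditions to the two halves. You group each half by the \emph{inner} stopping cube --- $\Sigma_{\mathrm I}$ (where $F\subseteq G$) by $F$ with direct testing \eqref{testing}, and $\Sigma_{\mathrm{II}}$ (where $G\subsetneq F$) by $G$ with dual testing \eqref{dualtesting}. After applying the localized testing, you are left in $\Sigma_{\mathrm{II}}$ with a sum like $\sum_{G\in\cg}\norm{f1_G}_{L^p(\sigma)}^p$, or, after invoking $\avs{f}_Q\le 2\avs{f}_{\pif{G}}$, with $\sum_F(\avs{f}_F)^p\sum_{G:\pif{G}=F}\sigma(G)$. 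The cubes $G\in\cg$ with $\pif{G}=F$ are in general \emph{nested}, so neither sum is controlled by $\norm{f}_{L^p(\sigma)}^p$; no Carleson embedding repairs this. The same obstruction appears in your $\Sigma_{\mathrm I}$: the coordinate-disjointness of the pieces $g^F$ does not yield $\sum_F\norm{g^F}_{L^{p'}_{\ell^{r'}}(\omega)}^{p'}\lesssim\norm{g}^{p'}$ when $r<p$ (as you note), and the stopping function $A_{\cf}f$ does not supply the missing quasi-orthogonality.

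The paper instead groups each half by the \emph{outer} stopping cube --- the $F\subseteq G$ half by $G$ (and applies the dual testing there), the $G\subseteq F$ half by $F$ (direct testing) --- so that the other family's stopping structure lies inside the localizing cube. The heart of the argument is then two nontrivial quasi-orthogonality claims: replacing $f$ by $f_G:=f1_{\eg{G}}+\sum_{G'\in\chsg{G}}\avs{f}_{G'}1_{G'}$ and proving $(\sum_G\norm{f_G}_{L^p(\sigma)}^p)^{1/p}\le 5p'\norm{f}_{L^p(\sigma)}$, and symmetrically proving $(\sum_F\norm{g_F}_{L^{p'}_{\ell^{r'}}(\omega)}^{p'})^{1/p'}\le 5p\norm{g}_{L^{p'}_{\ell^{r'}}(\omega)}$ via the decomposition $g_F=g_F1_{\ef{F}}+\sum_{F'\in\chf{F}}g_F1_{F'}$ together with the bound $\normrp{g_F}\le 2\avo{\gamma}_{\pig{F'}}$ on each $F'$. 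In both claims the key is that the relevant children $F'$ (resp.\ $G'$) satisfy $F'\subseteq G\subseteq F$ for some $G\in\cg$ (resp.\ $G'\subseteq F\subseteq G$ for some $F\in\cf$), which makes them pairwise disjoint inside a single cube of the other family and lets the Carleson embedding close the estimate. Your grouping by the inner cube loses exactly this containment.
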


\begin{remark}[Restrictions on the test functions in the dual testing condition]The dual testing condition \eqref{dualtesting} for all functions $g$ is equivalent to the dual testing condition restricted to functions $g$ such that $\normrp{g(x)}=1$ for $\omega$-almost every $x\in\rn$, which is seen as follows. Suppose that $\norm{g}_{L^\infty_{\ell^{r'}}(\omega)}<\infty$. Then  $\abs{g_Q}\leq \norm{g}_{L^\infty_{\ell^{r'}}(\omega)}\frac{1}{\normrp{g}}\abs{g_Q}$ for every $Q\in\cd$ $\omega$-almost everywhere. Note that $\normrp{\frac{1}{\normrp{g}}(\abs{g_Q})_{Q\in\cd}}=1$ $\omega$-almost everywhere. By the positivity and the linearity of the operator $T^*$ we have
$$
\abs{T^*((g_Q)_{Q\in\cd}\omega)}\leq T^*((\abs{g_Q})_{Q\in\cd}\omega)\leq \norm{g}_{L^\infty_{\ell^{r'}}(\omega)} T^*(\frac{1}{\normrp{g}}(\abs{g_Q})_{Q\in\cd}\omega).
$$
Moreover, the dual testing condition \eqref{dualtesting} for all functions $g=(g_Q)_{Q\in\cd}$ is equivalent to the dual testing condition restricted to piecewise constant functions $g=(a_Q1_Q)_{Q\in\cd}$, as observed in Section \ref{reductionobservation}.
\end{remark}
\begin{remark}[Sufficient condition for the dual condition]\label{remarkonsufficientcondition}
The condition 
\begin{equation}\label{extracond}
\norm{T_R(\omega)}_{L^{p'}_{\ell^r}(\sigma)}\leq C^* \omega(R)^{1/{p'}} \quad \text{ for all } R\in\dc
\end{equation}
implies the dual testing condition \eqref{dualtesting}. This is seen as follows. We have that
\begin{align*}
&T^*_R((a_Q1_Q)_{Q\in\cd}\,\omega)&&\\
&=\sum_{\substack{Q\in\cd:\\Q\subseteq R}}\lambda_Qa_Q1_Q\av{\omega}_Q&&\text{by the definition of $T^*_R$}\\
&\leq \abs{(a_Q1_Q)_{Q\in\cd}}_{r'}\normr{(\lambda_Q\av{\omega}_Q1_Q)_{{\begin{subarray}{l}Q\in\cd:\\Q\subseteq R\end{subarray}}}}&&\text{by H\"older's inequality}\\
&\leq \norm{(a_Q1_Q)}_{L^\infty_{\ell^{r'}}} \normr{T_R(\omega)}&&\text{by the definition of $T_R$.}
\end{align*}
Hence by \eqref{extracond} we have
\begin{align*}
\norm{T^*_R((a_Q1_Q)_{Q\in\cd}\,\omega)}_{L^{p'}(\sigma)}&\leq \norm{(a_Q1_Q)}_{L^\infty_{\ell^{r'}}} \norm{T_R(\omega)}_{L^{p'}_{\ell^r}(\sigma)}&& \\
&\leq C^*\norm{(a_Q1_Q)}_{L^\infty_{\ell^{r'}}(\omega)}\omega(R)^{1/{p'}}.&&
\end{align*}

\end{remark}
\begin{remark}[In the case $r=1$ we may consider a real-valued operator]\label{operators}

Consider the real-valued operator $S$ defined by
$$
Sf:=\abs{Tf}_1=\sum_{Q\in\cd} \lambda_ Q \av{f}_Q1_Q.
$$
Note that in this notation the direct testing condition \eqref{testing} is written as
$$
\norm{S_R(\sigma)}_{L^p(\omega)}\leq C \sigma(R)^{1/p}.
$$ 
Observe that the operator $S:L^p\to L^p$ is formally self-adjoint and that for the adjoint $(S(\,\cdot\,\sigma))^*:L^{p'}(\omega)\to L^{p'}(\sigma)$ of the operator $S(\,\cdot\,\sigma):L^p(\sigma)\to L^p(\omega)$ we have $S(\,\cdot\,\sigma))^*=S(\,\cdot\,\omega)$.
By Remark \ref{remarkonsufficientcondition} the dual testing condition \eqref{dualtesting} is implied by the dual testing condition 
\begin{equation}\label{dualtestingconditionfortheoperators}
\norm{S_R(\omega)}_{L^{p'}(\sigma)}\leq C^* \omega(R)^{1/{p'}},
\end{equation}
and, conversely, the dual testing condition \eqref{dualtestingconditionfortheoperators} is implied by the dual testing condition \eqref{dualtesting} applied to the function $g=(1_Q)_{Q\in\cd}$. Therefore Theorem \ref{scurry} in the case $r=1$ is equivalent to the following theorem.

\begin{thm}\label{caserone}
Let $1<p<\infty$. Suppose that $\sigma$ and $\omega$ are locally integrable positive functions. 
Then the two weight norm inequality
\begin{equation}
\norm{S(f\sigma)}_{L^p(\omega)}\leq \tilde{C} \norm{f}_{L^p(\sigma)}
\end{equation}
holds if and only if both of the following testing conditions hold
\begin{subequations}
\begin{align}
\norm{S_R(\sigma)}_{L^p(\omega)}&\leq C \sigma(R)^{1/p} \quad \text{ for all } R\in\dc\\
\norm{S_R(\omega)}_{L^{p'}(\sigma)}&\leq C \omega(R)^{1/{p'}} \quad \text{ for all } R\in\dc.
\end{align}
\end{subequations}
\end{thm}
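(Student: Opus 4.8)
The plan is to verify the (routine) necessity directly and to obtain sufficiency by a duality argument organised around parallel stopping cubes, adapting Hyt\"onen's proof of the case $r=1$ to the formally self-adjoint operator $S$. For necessity, inserting $f=1_R$ into the norm inequality and using $S_R(\sigma)\le S(1_R\sigma)$ gives the first testing condition; and since $(S(\,\cdot\,\sigma))^{*}=S(\,\cdot\,\omega)$ as a map $L^{p'}(\omega)\to L^{p'}(\sigma)$, the norm inequality is equivalent to $\norm{S(g\omega)}_{L^{p'}(\sigma)}\le\tilde C\norm{g}_{L^{p'}(\omega)}$, and inserting $g=1_R$ gives the second.

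For sufficiency, assume both testing conditions. After a monotone limiting argument reducing matters to finitely many nonzero $\lambda_Q$ (so that all the sums below converge), it suffices by duality and positivity to bound, for $f,g\ge 0$,
\[
\pairorealvalued{S(f\sigma)}{g}=\sum_{Q\in\dc}\lambda_Q\,\av{\sigma}_Q\,\avs{f}_Q\,\omega(Q)\,\avo{g}_Q
\]
by an absolute multiple of $p\,p'(C+C^{*})\normlps{f}\norm{g}_{L^{p'}(\omega)}$; note that this bilinear form is symmetric under $(f,\sigma,C)\leftrightarrow(g,\omega,C^{*})$. One then builds a stopping family $\cf$ adapted to $(f,\sigma)$ — the $\cf$-children of $F$ being the maximal $Q\subsetneq F$ with $\avs{f}_Q>2\avs{f}_F$ — which yields the pointwise stopping bound $\avs{f}_Q\le 2\avs{f}_{\pif Q}$, the $\sigma$-sparseness $\sum_{F'\in\chf F}\sigma(F')\le\tfrac12\sigma(F)$, and hence the dyadic Carleson embedding $\sum_{F\in\cf}(\avs{f}_F)^p\sigma(F)\le 2(p')^p\normlps{f}^p$; and symmetrically a family $\cg$ for $(g,\omega)$. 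Splitting $\dc$ into $\{Q:\pif Q\subseteq\pig Q\}$ and $\{Q:\pig Q\subsetneq\pif Q\}$ splits the bilinear form as $\Sigma_1+\Sigma_2$, and by the symmetry just noted it is enough to estimate $\Sigma_1$; then $\Sigma_2$ is estimated by the same computation with the roles of $(f,\sigma)$ and $(g,\omega)$ reversed, which is why the two testing constants enter additively rather than multiplicatively.

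To estimate $\Sigma_1$ one uses that on $\{\pif Q\subseteq\pig Q\}$ one has $\pig Q=\pig{\pif Q}$, and organises the sum over $G:=\pig Q\in\cg$; for fixed $G$ the contributing $\cf$-stopping cubes form the subfamily $\cf_G:=\{F\in\cf:\pig F=G\}$ of cubes contained in $G$, itself $\sigma$-sparse. Using $\avo{g}_Q\le 2\avo{g}_G$ (the $\cg$-stopping bound, legitimate since $\pig Q=G$) and the $\cf$-stopping bound for $\avs{f}_Q$, one reaches
\[
\Sigma_1\le 4\sum_{G\in\cg}\avo{g}_G\sum_{F\in\cf_G}\avs{f}_F\,B_F,\qquad
B_F:=\sum_{Q:\,\pif Q=F,\ \pig Q=G}\lambda_Q\,\av{\sigma}_Q\,\omega(Q).
\]
For every $F_0\in\cf_G$ the direct testing condition gives $\sum_{F\in\cf_G,\,F\subseteq F_0}B_F\le\int_{F_0}S_{F_0}(\sigma)\,d\omega\le\norm{S_{F_0}(\sigma)}_{L^p(\omega)}\,\omega(F_0)^{1/p'}\le C\,\sigma(F_0)^{1/p}\omega(F_0)^{1/p'}$, so $(B_F)_{F\in\cf_G}$ is a Carleson-type sequence controlled by the testing constants. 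If one can convert this into the embedding $\sum_{F\in\cf_G}\avs{f}_F B_F\lesssim (C+C^{*})\big(\sum_{F\in\cf_G}(\avs{f}_F)^p\sigma(F)\big)^{1/p}\omega(G)^{1/p'}$, then, since the $\cf_G$ partition $\cf$, H\"older over $G\in\cg$ together with the two Carleson embeddings of the previous paragraph closes the estimate:
\[
\Sigma_1\lesssim (C+C^{*})\Big(\sum_{F\in\cf}(\avs{f}_F)^p\sigma(F)\Big)^{1/p}\Big(\sum_{G\in\cg}(\avo{g}_G)^{p'}\omega(G)\Big)^{1/p'}\lesssim p\,p'\,(C+C^{*})\,\normlps{f}\norm{g}_{L^{p'}(\omega)}.
\]

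The embedding displayed in the previous paragraph is the heart of the matter, and the step I expect to be hardest. After extracting the nearly constant factor $\avo{g}_G$ the two weights are decoupled, but the residual sum $\sum_{F\in\cf_G}\avs{f}_F B_F$ must still be dominated by $\sigma$-data of $f$ localised to $G$ times $\omega(G)^{1/p'}$, whereas $\cf_G$ is only $\sigma$-sparse, so the $\omega$-masses of a nested chain of $\cf_G$-cubes can add up badly and the crude bound $B_F\le C\sigma(F)^{1/p}\omega(F)^{1/p'}$ is insufficient. Overcoming this is exactly where the two stopping families are used jointly: one replaces $f$ by its stopped function $\sum_{F\in\cf}\avs{f}_F\,1_{\ef F}$, which is constant on the disjoint sets $\ef F$ and satisfies $S(f\sigma)\le 2\,S\big((\sum_{F}\avs{f}_F1_{\ef F})\sigma\big)$ pointwise; one then uses the weighted self-adjointness identity $\int_G S_G(h\sigma)\,d\omega=\int_G h\,S_G(\omega)\,d\sigma$ to throw the estimate of the inner sum onto $S_G(\omega)$, where the \emph{dual} testing condition applies; and one exploits that the cubes contributing to $B_F$ sit at $\cg$-scale $G$ in order to localise the residual $\omega$-mass onto pieces of $G$ that are disjoint over a suitable thinning of $\cf_G$. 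Carrying out this last localisation carefully is the technical core of the proof. (For $1<r<\infty$ the same scheme applies once Stein's inequality is used to interchange the $\ell^{r}$ and $\ell^{r'}$ norms with the dyadic averages, which accounts for the constant $C_{p',r'}$ in Theorem~\ref{scurry}.)
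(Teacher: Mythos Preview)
Your setup---duality, positivity, finiteness, parallel stopping families $\cf$ and $\cg$, and the split according to $\pif{Q}\subseteq\pig{Q}$ versus $\pig{Q}\subsetneq\pif{Q}$---matches the paper exactly, and your necessity argument is fine. The divergence, and the gap, is at the step you yourself flag as the hardest.

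After the split you pull out \emph{both} stopping averages, arriving at
\[
\Sigma_1\le 4\sum_{G\in\cg}\avo{g}_G\sum_{F\in\cf_G}\avs{f}_F\,B_F,\qquad B_F=\sum_{Q:\pi(Q)=(F,G)}\lambda_Q\av{\sigma}_Q\,\omega(Q),
\]
and then ask for the embedding $\sum_{F\in\cf_G}\avs{f}_F B_F\lesssim (C+C^*)\big(\sum_{F\in\cf_G}(\avs{f}_F)^p\sigma(F)\big)^{1/p}\omega(G)^{1/p'}$. Your sketch of this---replace $f$ by the stopped function $\tilde f=\sum_F\avs{f}_F 1_{\ef{F}}$, use self-adjointness to flip onto $S_G(\omega)$, apply the dual testing condition---does give $\sum_{F\in\cf_G}\avs{f}_F B_F\le C^*\norm{\tilde f\,1_G}_{L^p(\sigma)}\omega(G)^{1/p'}$. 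But this is not the embedding you want: $\norm{\tilde f\,1_G}_{L^p(\sigma)}$ involves \emph{all} $F\in\cf$ meeting $G$, not just those in $\cf_G$, and since the cubes $G\in\cg$ are nested, $\sum_G\norm{\tilde f\,1_G}_{L^p(\sigma)}^p$ is not controlled by $\norm{f}_{L^p(\sigma)}^p$. The final clause of your sketch (``localise the residual $\omega$-mass onto pieces of $G$ that are disjoint over a suitable thinning of $\cf_G$'') does not repair this overcounting, which is on the $\sigma$-side, not the $\omega$-side. So as written the embedding is not established.

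The paper avoids this difficulty by \emph{not} extracting $\avs{f}_F$ on the half where the dual testing condition is used. For the summand with $F\subseteq G$ one keeps the integral $\int_Q f\sigma$ intact and observes that, since $\pig{Q}=G$, one may replace $f$ by the localized function
\[
f_G:=f\,1_{\eg{G}}+\sum_{G'\in\chsg{G}}\avs{f}_{G'}1_{G'},
\]
where $\chsg{G}$ collects the $\cg$-children $G'$ of $G$ that lie under some $F\in\cf$ with $\pig{F}=G$. The inner sum is then dominated by $\pairs{f_G}{T^*_G(g_G\omega)}$, to which the dual testing condition applies directly, yielding the factor $C^*\avo{\normrp{g}}_G\omega(G)^{1/p'}$; the whole estimate closes via the quasi-orthogonality $\sum_G\norm{f_G}_{L^p(\sigma)}^p\le(5p')^p\norm{f}_{L^p(\sigma)}^p$, which the paper proves from the $\cf$-stopping property (a4) and the pairwise disjointness of the sets $\eg{G}$ and of the families $\{G':\pif{G'}=F\}$. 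This is the key technical lemma you are missing: the correct localization of $f$ is by the $\cg$-structure (the sets $\eg{G}$ and selected $\cg$-children), not by the $\cf$-structure alone, and that is precisely what makes $\sum_G\norm{f_G}^p$ finite. The $\cf$-stopping family enters only to control the averages $\avs{f}_{G'}$ appearing in $f_G$. Symmetrically, on the half $G\subseteq F$ one extracts only $\avs{f}_F$, defines $g_F:=(g_Q)_{Q:\pi(Q)=(F,G),\,G\subseteq F}$, applies the direct testing condition, and proves $\sum_F\norm{g_F}_{L^{p'}_{\ell^{r'}}(\omega)}^{p'}\le(5p)^{p'}\norm{g}_{L^{p'}_{\ell^{r'}}(\omega)}^{p'}$; here the $\cg$-stopping property (b4) is used, via the observation that on each $F'\in\chf{F}$ the surviving components of $g_F$ are indexed by cubes $Q\supsetneq F'$. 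Note also that in the paper each half uses only one of $C,C^*$, whereas your embedding needs both simultaneously; this asymmetry is a hint that extracting both stopping averages up front is not the efficient route.
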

\end{remark}

\section{Proof of the theorem in the case $1\leq r\leq\infty$}
\begin{notation}
We use the following standard notation: $\avs{f}_Q:=\frac{1}{\sigma(Q)}\int_Q f\sigma$, $\av{f}_Q:=\frac{1}{\abs{Q}}\int_Q f$, and $\normrp{g}:=\norm{g}_{\ell^{r\prime}}$. 
\end{notation}
\begin{proof}[Proof of the necessity of the testing conditions]

By duality the norm inequality \eqref{norminequality} for the operator $T(\,\cdot\, \sigma):L^p(\sigma)\to L^{p}_{\ell^r}(\omega)$ is equivalent to the norm inequality
\begin{equation}\label{dualnorminequality}
\norm{T^*(g\omega)}_{L^{p'}(\sigma)}\leq \tilde{C}  \norm{g}_{L^{p'}_{\ell^{r'}}(\omega)}
\end{equation}
for the adjoint operator $T^*(\,\cdot\,\omega):L^{p'}_{\ell^{r'}}(\omega)\to L^{p'}(\sigma) $. The necessity of the direct testing condition \eqref{testing} follows by applying the norm inequality \eqref{norminequality} for functions $f=1_R$ and the necessity of the dual testing condition \eqref{dualtesting} follows by applying the norm inequality \eqref{dualnorminequality} for functions $g1_R$ and using the estimate $$
\norm{g1_R}_{L^{p'}_{\ell^{r'}}(\omega)}\leq \norm{g}_{L^\infty_{\ell^{r'}}(\omega)} \omega(R)^{1/p'}.
$$
\end{proof}
\begin{proof}[Proof of the sufficiency of the testing conditions] By duality the norm inequality \eqref{norminequality} is equivalent the following norm inequality for the dual pairing
$$
\pairo{T(f\sigma)}{g}\leq \tilde{C} \normlps{f}\normlppo{g}.
$$

\subsection{Reductions}\label{section_reductions}
\begin{claim}[Reduction]We may assume that $f\geq0$, $g_Q\geq0$ for every $Q\in\cd$, and $g=(a_Q1_Q)_{Q\in\cd}$ for some constants $a_Q\geq0$. Moreover, we may assume that the collection $\cd$ is finite and that for some $Q_0\in\cd$ we have $Q\subseteq Q_0$ for all $Q\in\cd$.
\end{claim}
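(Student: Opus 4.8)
The plan is to justify the reductions one at a time, in the order of increasing subtlety, and to show in each case that the reduced inequality implies the general one (possibly with a harmless universal constant).

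\emph{Reduction to nonnegative $f$ and $g$.} First I would note that $\abs{T(f\sigma)_Q} = \lambda_Q \abs{\av{f}_Q} 1_Q \le \lambda_Q \av{\abs{f}}_Q 1_Q = T(\abs{f}\sigma)_Q$ pointwise for every $Q$, so $\normlpo{T(f\sigma)} \le \normlpo{T(\abs{f}\sigma)}$ while $\normlps{\abs{f}} = \normlps{f}$; hence it suffices to prove \eqref{norminequality} for $f \ge 0$. Dually, for the pairing formulation, since $T(f\sigma)_Q \ge 0$ when $f \ge 0$, we have $\abs{\pairo{T(f\sigma)}{g}} \le \pairsimple{T(f\sigma)}{(\abs{g_Q})_{Q\in\cd}}$, and $\norm{(\abs{g_Q})_{Q\in\cd}}_{L^{p'}_{\ell^{r'}}(\omega)} = \normlppo{g}$, so we may also take each $g_Q \ge 0$.

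\emph{Reduction to piecewise-constant $g$ and to a finite cube collection.} For the piecewise-constant reduction I would invoke the observation promised in Section~\ref{reductionobservation} (the replacement of $g_Q$ by $\av{g_Q}_Q 1_Q$, which only increases the pairing by positivity while not increasing the $L^{p'}_{\ell^{r'}}(\omega)$ norm, after an application of Jensen/H\"older to the averages); I would state this as a citation to that section rather than reprove it here. Finally, for the reduction to a finite collection contained in a single cube $Q_0$: given any finite truncation, the testing constants $C, C^*$ for the truncated operator are bounded by those for the full operator (the localized operators $T_R, T^*_R$ only shrink), so the bound for $\tilde C$ we will obtain is uniform over all such truncations. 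One then exhausts $\cd$ by an increasing sequence of finite subcollections $\cd_N$, each contained in a large cube, applies the reduced theorem to each, and passes to the limit by monotone convergence (using $f, g \ge 0$, which is exactly why the previous reduction was done first) to recover the inequality for the full operator $T$ on all of $\cd$; a standard covering argument handles the passage from "contained in a single $Q_0$" to "all dyadic cubes" by splitting $\rn$ into a countable disjoint union of unit cubes and summing the $\ell^r$-bounds with disjoint supports.

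The main obstacle I expect is not any single estimate but keeping the \emph{order of the reductions} and the \emph{direction of each implication} straight: the monotone-convergence step at the end genuinely requires $f \ge 0$ and $g_Q \ge 0$, so the positivity reductions must come first; and one must check that each reduction produces test data to which the \emph{reduced} form of the theorem literally applies, with testing constants no larger than the original $C$ and $C^*$, so that the final constant $\tilde C \le C_{p',r'} 20 p\, p' (C + C^*)$ is preserved verbatim through all the limits. The piecewise-constant reduction is the one place where a small computation (bounding $\av{g_Q}_Q$-averages in $\ell^{r'}$) is hidden, and I would simply defer it to Section~\ref{reductionobservation} as the text already signals.
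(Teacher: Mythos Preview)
Your overall structure matches the paper's: positivity of $f$ and $g_Q$ via the triangle inequality, truncation to a finite collection inside a single $Q_0$ via monotone convergence, and the piecewise-constant reduction via the observation labeled \eqref{reductionobservation}. The order and direction of the implications are correct.

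However, your description of the piecewise-constant step is wrong in a way that matters. The correct replacement is $g_Q \mapsto \av{g_Q}^\omega_Q\, 1_Q$ (the $\omega$-weighted average, not the Lebesgue average $\av{g_Q}_Q$ you wrote); with the weighted average the identity \eqref{reductionobservation} shows the pairing is \emph{exactly preserved}, not merely ``increased by positivity''. More importantly, the norm $\norm{(\av{g_Q}^\omega_Q 1_Q)_Q}_{L^{p'}_{\ell^{r'}}(\omega)}$ is \emph{not} controlled by the original norm via Jensen or H\"older; the bound requires Stein's vector-valued martingale inequality, and this is precisely where the constant $C_{p',r'}$ in the final estimate $\tilde C \le C_{p',r'}\, 20\, p\, p'(C+C^*)$ enters. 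Your assertion that the replacement is ``not increasing the $L^{p'}_{\ell^{r'}}(\omega)$ norm'' would, if true, delete $C_{p',r'}$ from the conclusion---so this is not a cosmetic slip but the one nontrivial analytic input in the whole reduction, and the one step you cannot simply defer as a citation.
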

\begin{proof}[Proof of the claim]
Since
$$
\abs{\pairo{T(f\sigma)}{(g_Q)_{Q\in\cd}}}\leq \pairo{T(\abs{f}\sigma)}{(\abs{g_Q})_{Q\in\cd}},
$$
$\normlps{f}=\normlps{\abs{f}}$ and $\normlppo{(g_Q)_{Q\in\cd}}=\normlppo{(\abs{g_Q})_{Q\in\cd}}$, we may assume that $g_Q\geq 0$ and $f\geq0$. By the monotone convergence theorem we may assume that the collection $\cd$ is finite and that all dyadic cubes in the collection $\dc$ are contained in some dyadic cube $Q_0$. We observe that 
\begin{equation}\label{reductionobservation}
\begin{split}
T^*((g_Q)\omega)&=\sum_{Q\in\dc}\lambda_Q\av{g_Q\omega}_Q1_Q=\sum_{Q\in\dc}\lambda_Q\av{g_Q}^\omega_Q\av{\omega}_Q1_Q\\&=\sum_{Q\in\dc}\lambda_Q\av{\av{g_Q}^\omega_Q1_Q\omega}_Q1_Q=T^*((\av{g_Q}^\omega_Q1_Q)\omega)
.\end{split}
\end{equation}
For $1\leq r'\leq \infty$ and $1<p'<\infty$ we have by Stein's inequality 
\begin{equation*}\label{steinsinequality}
\normlppo{(\av{g_Q}^\omega_Q1_Q)}\leq C_{p',r'} \normlppo{(g_Q1_Q)}
\end{equation*}
for \begin{equation}\label{constantinsteininequality}
C_{p',r'}=\begin{cases}{(\frac{p'}{r'}})^{1/{r'}}&\text { if } p'\geq r'\\
(\frac{p}{r})^{1/r}&\text{ if } p'<r'
\end{cases}.
\end{equation}
Hence we may assume that the function $g$ is piecewise constant in the sense that $g=(a_Q1_Q)$ for some constants $a_Q\geq0$.
\end{proof}
\begin{remark}
The constant \eqref{constantinsteininequality} in Stein's inequality can be checked in the following well-known way. Let $(\mathcal{F}_k)_{k\in\mathbb{Z}}$ be a filtration. By Doob's inequality $$\norm{(\mathbb{E}(f\vert\mathcal{F}_k))_{k\in\mathbb{Z}}}_{L^p_{\ell^\infty}}\leq p' \norm{f}_{L^p}$$ for all $1<p\leq \infty$ and for all nonnegative functions $f$. From this it follows directly that $$\norm{(\mathbb{E}(g_k\vert \mathcal{F}_k)_{k\in\mathbb{Z}}}_{L^p_{\ell^\infty}}\leq \norm{(\mathbb{E}(\abs{g_k}_\infty\vert \mathcal{F}_k)_{k\in\mathbb{Z}}}_{L^p_{\ell^\infty}}\leq p' \norm{(g_k)_{k\in\mathbb{Z}}}_{L^p_{\ell^\infty}}$$ and by using duality that 
\begin{equation}\label{vectorvalueddoob}\norm{(\mathbb{E}(g_k\vert \mathcal{F}_k))_{k\in\mathbb{Z}}}_{L^p_{\ell^1}}\leq p \norm{(g_k)_{k\in\mathbb{Z}}}_{L^p_{\ell^1}}
\end{equation} for all $1\leq p<\infty$ and for all nonnegative functions $(g_k)_{k\in\mathbb{Z}}$. 
Hence in the case $p/r\geq 1$ we have by Jensen's inequality and by the inequality \eqref{vectorvalueddoob} that 
\begin{equation*}
\begin{split}
&\norm{(\mathbb{E}(g_k\vert \mathcal{F}_k))_{k\in\mathbb{Z}}}_{L^p_{\ell^r}}=\norm{(\mathbb{E}(g_k\vert \mathcal{F}_k)^r)_{k\in\mathbb{Z}}}^{1/r}_{L^{p/r}_{\ell^1}}\\
&\leq \norm{(\mathbb{E}(g^r_k\vert \mathcal{F}_k))_{k\in\mathbb{Z}}}^{1/r}_{L^{p/r}_{\ell^1}}\leq (\frac{p}{r})^{1/r}\norm{(g^r_k)_{k\in\mathbb{Z}}}^{1/r}_{L^{p/r}_{\ell^1}} =(\frac{p}{r})^{1/r}\norm{(g_k)_{k\in\mathbb{Z}}}_{L^p_{\ell^r}}.
\end{split}
\end{equation*}
Case $p/r< 1$ can be checked by using duality.
\end{remark}
\subsection{Constructing stopping cubes and organizing the summation}
Next we define recursively stopping cubes for each of the pairs $(f,\sigma)$ and $(g,\omega)$.
\begin{claim}[Construction and properties of the stopping cubes related to the pair $(g,\omega)$] Let $\chg{G}$ be the collection of all the maximal dyadic subcubes $G'$ of $G$ such that 
\begin{equation}\label{stoppingg}
\normrp{(a_Q)_{{\begin{subarray}{l}Q\in\cd :\\ Q\supseteq G'\end{subarray}}}}>2\avgg.
\end{equation}
Define recursively $\cg_0:=\{Q_0\}$ and $\cg_{k+1}:=\bigcup_{G\in\cg_k}\chg{G}$. Let $\cg:=\bigcup_{k=0}^\infty \cg_k$. Let 
\begin{equation*}
\label{definition_eg}
\eg{G}:=G\setminus\bigcup_{G'\in\chg{G}}G'.
\end{equation*}
 Define $\pig{Q}$ as the minimal $G\in\cg$ such that $Q\subseteq G$. Then the following properties hold:
\begin{itemize}
\item[(b1)] The sets $\{G'\}_{G'\in\chg{G}}\cup\{\eg{G}\}$ partition $G$.
\item[(b2)] The collection $\{\eg{G}\}_{G\in\cg}$ is pairwise disjoint.
\item[(b3)] $\omega(\eg{G})\geq \frac{1}{2} \omega(G)$.
\item[(b4)] $\normrp{(a_Q)_{\begin{subarray}{l}Q\in\cd:\\ Q\supseteq R\end{subarray}}}\leq 2\avo{\normrp{g}}_{\pig{R}}$
\item[(b5)] $\norm{(g_Q)_{\begin{subarray}{l}Q\in\cd :\\ \pig{Q}=G\end{subarray}}}_{L^{\infty}_{\ell^{r'}}(\omega)}\leq 2 \avgg$.
\end{itemize}
\end{claim}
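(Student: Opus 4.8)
The plan is to deduce all five items from a single elementary fact: since every dyadic cube containing a cube $R$ contains every point of $R$, we have the pointwise bound
\[
\normrp{(a_Q)_{Q\in\cd:\ Q\supseteq R}}\ \le\ \normrp{(a_Q1_Q(x))_{Q\in\cd}}\ =\ \normrp{g(x)}\qquad\text{for every }x\in R.
\]
Recall that, by the reduction step, $\cd$ is finite, so every family of maximal subcubes appearing below is finite and pairwise disjoint. Averaging the displayed inequality against $\omega$ over $G$ in the case $R=G\in\cg$ gives the trivial half $\normrp{(a_Q)_{Q\supseteq G}}\le\avo{\normrp{g}}_G$, which in particular shows that $G$ never satisfies its own stopping inequality \eqref{stoppingg}, the right-hand side of which carries an extra factor $2$. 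This interplay between the factor $2$ in \eqref{stoppingg} and the factor $1$ in the trivial half is the only place where the construction is genuinely used; everything else is bookkeeping with stopping trees.

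First I would dispose of (b1), (b2) and (b3). For (b1): the cubes in $\chg{G}$ are maximal among the subcubes of $G$ satisfying \eqref{stoppingg}, hence pairwise incomparable, hence (being dyadic) pairwise disjoint, and $\eg{G}$ is $G$ with their union removed, so $\{G'\}_{G'\in\chg{G}}\cup\{\eg{G}\}$ partitions $G$. For (b2): $\cg$ is a tree under inclusion whose children are given by $\chg{\,\cdot\,}$, so two distinct cubes $G,\tilde G\in\cg$ are either disjoint, in which case $\eg{G}\subseteq G$ and $\eg{\tilde G}\subseteq\tilde G$ are disjoint, or nested, say $\tilde G\subsetneq G$, in which case $\tilde G$ lies inside some $G'\in\chg{G}$ and hence $\eg{\tilde G}\subseteq\tilde G\subseteq G'$ is disjoint from $\eg{G}=G\setminus\bigcup_{G'\in\chg G}G'$. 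For (b3): if $G'\in\chg{G}$ and $x\in G'$, the pointwise bound and \eqref{stoppingg} give $\normrp{g(x)}\ge\normrp{(a_Q)_{Q\supseteq G'}}>2\avo{\normrp{g}}_G$, so $\bigcup_{G'\in\chg G}G'\subseteq\{x\in G:\normrp{g(x)}>2\avo{\normrp{g}}_G\}$; Chebyshev's inequality with respect to $\omega\,dx$ then gives $\sum_{G'\in\chg G}\omega(G')\le\frac12\omega(G)$, whence $\omega(\eg{G})=\omega(G)-\sum_{G'\in\chg G}\omega(G')\ge\frac12\omega(G)$.

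Next I would prove (b4), which carries the main idea. Put $G:=\pig{R}$. If $R=G$, the trivial half already gives $\normrp{(a_Q)_{Q\supseteq R}}\le\avo{\normrp{g}}_G\le 2\avo{\normrp{g}}_{\pig{R}}$. If $R\subsetneq G$, I claim $R$ cannot satisfy $\normrp{(a_Q)_{Q\supseteq R}}>2\avo{\normrp{g}}_G$: otherwise $R$ would be a subcube of $G$ satisfying \eqref{stoppingg}, hence contained in a maximal such cube $G'\in\chg{G}\subseteq\cg$, with $G'\ne G$ because $G$ fails its own stopping inequality; then $R\subseteq G'\subsetneq G=\pig{R}$, contradicting the minimality of $\pig{R}$. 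Hence $\normrp{(a_Q)_{Q\supseteq R}}\le 2\avo{\normrp{g}}_G=2\avo{\normrp{g}}_{\pig{R}}$ in all cases.

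Finally, (b5) follows from (b4). Fix $G\in\cg$ and a point $x$. The cubes $Q$ with $\pig{Q}=G$ and $x\in Q$ all contain $x$, hence are totally ordered by inclusion, hence (by finiteness of $\cd$) possess a smallest element $R_x$, which satisfies $\pig{R_x}=G$; if there is no such cube, the sequence $(g_Q(x))_{Q:\pig{Q}=G}$ vanishes and there is nothing to prove. Since every such $Q$ contains $R_x$,
\[
\normrp{(g_Q(x))_{Q:\ \pig{Q}=G}}=\normrp{(a_Q)_{Q:\ x\in Q,\ \pig{Q}=G}}\le\normrp{(a_Q)_{Q\supseteq R_x}}\le 2\avo{\normrp{g}}_{\pig{R_x}}=2\avo{\normrp{g}}_G
\]
by (b4); taking the $\omega$-essential supremum over $x$ yields (b5). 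The step I expect to require the most care is not any single estimate but keeping the tree bookkeeping airtight — especially the use of the tree structure of $\cg$ in (b2) and the claim in (b4) that a cube $R$ with $R\subsetneq\pig{R}$ cannot trigger \eqref{stoppingg} — together with tracking the factor $2$ correctly throughout.
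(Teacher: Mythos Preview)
Your proof is correct and follows essentially the same approach as the paper's: the same pointwise bound $\normrp{(a_Q)_{Q\supseteq R}}\le\normrp{g(x)}$ for $x\in R$ drives everything, and your arguments for (b1), (b2), (b4), (b5) match the paper's almost line for line. The only cosmetic difference is in (b3), where the paper averages over the partition $\{G'\}_{G'\in\chg{G}}\cup\{\eg{G}\}$ (law of total expectation) rather than invoking Chebyshev, but the two computations are the same inequality read two ways; you are also a bit more explicit than the paper in handling the case $R=G$ in (b4) via the ``trivial half''.
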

\begin{proof}[Proof of the claim] The property (b1) holds because $G'\in\chg{G}$ are maximal subcubes of $G$ and $\eg{G}$ is the complement of $\bigcup_{G'\in\chg{G}}G'$ in $G$. Next we check the property (b2). By definition of the set $\eg{G}$ we have that the collection $\{\eg{G}\}\cup\{G'\}_{G'\in\chg{G}}$ is pairwise disjoint. Since $\eg{G'}\subseteq G'$, the collection $\{\eg{G}\}\cup\{\eg{G'}\}_{G'\in\chg{G}}$ is pairwise disjoint. This together with the recursive definition of the collection $\cg$ implies by induction that the collection $\{\eg{G}\}_{G\in\cg}$ is pairwise disjoint.

Next we prove the property (b3). We have
\begin{align*}
\avog_G&=\sum_{G'\in\chg{G}}\avog_{G'}\frac{\omega(G')}{\omega(G)}+\avog_{\eg{G}}\frac{\omega(\eg{G})}{\omega(G)}&&\text{the law of total expectation}\\
&\geq \sum_{G'\in\chg{G}}\avog_{G'}\frac{\omega(G')}{\omega(G)}&&\\
&\geq\sum_{G'\in\chg{G}}\normrp{(a_Q)_{\begin{subarray}{l}Q\in\cd:\\ Q\supseteq G'\end{subarray}}} \frac{\omega(G')}{\omega(G)}&&\\
&\geq 2 \avgg \sum_{G'\in\chg{G}}\ \frac{\omega(G')}{\omega(G)}&&\text{by \eqref{stoppingg}.}
\end{align*}
Hence $$\sum_{G'\in\chg{G}}\omega(G')\leq \frac{1}{2}\omega(G),$$
which by the definition $\eg{G}:=G\setminus\bigcup_{G'\in\chg{G}}G'$ is equivalent to $$\omega(\eg{G})\geq \frac{1}{2} \omega(G).$$ 

Next we prove (b4). Assume that $\pig{R}=G$. By definition this means that $G\in\cg$ is such that $R\subseteq G$ and that there is no $G'\in\cg$ such that $R\subseteq G'\subsetneq G$. If we had
$$\normrp{(a_Q)_{\begin{subarray}{l}Q\in\cd:\\Q\supseteq R\end{subarray}}}>2\avo{\normrp{g}}_G,$$ 
then by definition of the collection $\chg{G}$ there would be $G'\in\chg{G}$ such that $R\subseteq G'$ and $G'\subsetneq G$, in which case $R\subseteq G'\subsetneq G$. Therefore by contrapositive we have

$$\normrp{(a_Q)_{\begin{subarray}{l}Q\in\cd:\\Q\supseteq R\end{subarray}}}\leq2\avo{\normrp{g}}_G.$$ 

Next we prove (b5). Observe that the function $x\mapsto(a_Q1_Q(x))_{\begin{subarray}{l}Q\in\cd :\\ \pig{Q}=G\end{subarray}}$ is supported on $\bigcup_{Q\in\cd:\pig{Q}=G}Q$. Let $x$ be in the support of the function. Let $Q_x$ be the minimal $Q\in\cd$ such that $Q\ni x$ and $\pig{Q}=G$. By the piecewise constancy and the property (b4) we have
$$
\normrp{(a_Q1_Q(x))_{\begin{subarray}{l}Q\in\cd :\\ \pig{Q}=G\end{subarray}}}=\normrp{(a_Q)_{\begin{subarray}{l}Q\in\cd :\\ \pig{Q}=G \text{ and } Q\supseteq Q_x\end{subarray}}}\leq \normrp{(a_Q)_{\begin{subarray}{l}Q\in\cd :\\ Q\supseteq Q_x\end{subarray}}}\leq 2 \avo{\normrp{g}}_{\pig{Q_x}}=2\avo{\normrp{g}}_{G}.$$
This completes the proof of the claim.
\end{proof}

For the pair $(f,\sigma)$ we choose the stopping cubes as in the case $r=1$, which is as follows. Let $\chf{F}$ be the collection of all maximal dyadic subcubes $F'$ of $F$ such that 
\begin{equation}\label{stoppingconditionforf}
\av{f}^\sigma_{F'}>2\av{f}^\sigma_{F}.
\end{equation}
Define recursively $\cf_0:=\{Q_0\}$ and $\cf_{k+1}:=\bigcup_{F\in\cf_k}\chf{F}$. Let $\cf:=\bigcup_{k=0}^\infty \cf_k$.
Let $$\ef{F}:=F\setminus\bigcup_{F'\in\chf{F}}F'.$$ Define $\pif{Q}$ as the minimal $F\in\cf$ such that $Q\subseteq F$.
The construction has the following well-known properties.
\begin{itemize}
\item[(a1)] The sets $F'\in\chf{F}$ and $\ef{F}$ partition $F$.
\item[(a2)] The collection $\{\ef{G}\}_{G\in\cf}$ is pairwise disjoint.
\item[(a3)] $\sigma(\ef{F})\geq \frac{1}{2} \sigma(F)$.
\item[(a4)] $\avfs_Q\leq 2\avfs_{\pif{Q}}$.
\end{itemize}

Next we split the summation in the dual pairing by using the stopping cubes. Let $\pi(Q)=(F,G)$ denote that $\pif{Q}=F$ and $\pig{Q}=G$.

\begin{subequations}
\begin{align}
\pairo{T(f\sigma)}{g}&= \sum_{Q\in\cd} \lambda_Q \avs{f}_Q\av{\sigma}_Q\avo{g_Q}_Q\av{\omega}_Q\abs{Q}\nonumber\\\label{firstsummation}
&\leq\sum_{G\in\cg}(\sum_{\substack{F\in\cf\\F\subseteq G}}\sum_{\substack{Q\in\cd  \\\pi(Q)=(F,G)}} \lambda_Q\av{g_Q\omega}_Q\int_Qf\sigma )\\&\phantom{=}+\sum_{F\in\cf}(\sum_{\substack{G\in\cg\\G\subseteq F}}\sum_{\substack{Q\in\cd \\ \pi(Q)=(F,G)}} \lambda_Q\avs{f}_Q\av{\sigma}_Q\int_Q g_Q\omega).\label{secondsummation}
\end{align}
\end{subequations}
\begin{remark}
As explained in Remark \ref{operators}, in the case $r=1$ we may deal symmetrically with the pairs $(f,\sigma)$ and $(g,\omega)$. Hence in the case $r=1$ we may impose the stopping condition 
$$
\avo{g}_{G'}>2\avo{g}_{G}.
$$ for the real-valued function $g$, as it is done in Hyt\"onen's proof \cite[Section 6]{hytonen2012} of the case $r=1$, whereas in the case $1<r<\infty$ we reduce the sequence-valued function $g=(g_Q)$ to the piecewise constant function $g=(a_Q1_Q)$ and impose the stopping condition $$
\normrp{(a_Q)_{{\begin{subarray}{l}Q\in\cd :\\ Q\supseteq G'\end{subarray}}}}>2\avgg.
$$
\end{remark}
\subsection{Lemma} 
The following well-known lemma will be used in Section \ref{applyingthedualtestingcondition} and in Section \ref{applyingthedirecttestingcondition}.
\begin{lemma}[Special case of dyadic Carleson embedding theorem]\label{carlesonembedding}Let $1<p<\infty$. Suppose that $\{\ef{F}\}_{F\in\cf}$ is a pairwise disjoint collection such that for each $F\in\cf$ we have $\ef{F}\subseteq F$ and $\sigma(F)\leq 2 \sigma(\ef{F})$. Then
$$
(\sum_{F\in\cf}(\avs{\abs{f}}_{F})^p\sigma(F))^{1/p}\leq 2^{1/p}p'\norm{f}_{L^p(\sigma)}.
$$
\end{lemma}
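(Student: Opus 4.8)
The plan is to prove the Carleson embedding lemma by the standard "good-$\lambda$ / distributional" dualization argument: replace the $\ell^p$ sum over the stopping family $\cf$ by an integral against the dual exponent, transferring the problem to a pointwise maximal-function estimate.

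First I would observe that, since $\sigma(F)\le 2\sigma(\ef F)$ and the sets $\ef F$ are pairwise disjoint subsets of $F$, the measure $\mu:=\sum_{F\in\cf}\sigma(F)\,\delta_F$ on the collection $\cf$ is a Carleson measure in the sense that $\sum_{F\in\cf,\,F\subseteq F_0}\sigma(F)\le 2\sigma(F_0)$ for every $F_0$; indeed the sets $\{\ef F : F\subseteq F_0\}$ are disjoint subsets of $F_0$, so $\sum_{F\subseteq F_0}\sigma(F)\le 2\sum_{F\subseteq F_0}\sigma(\ef F)\le 2\sigma(F_0)$. Next I would reduce to $f\ge 0$ and, by density/monotone convergence, to the case where the sum is finite. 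Writing $\Lambda:=\sum_{F\in\cf}(\avs f_F)^p\sigma(F)$, I would dualize the $\ell^p(\mu)$ norm: choose nonnegative coefficients $(b_F)$ with $\sum_F b_F^{p'}\sigma(F)\le 1$ such that $\Lambda^{1/p}=\sum_F b_F\,\avs f_F\,\sigma(F)$, and then write $\avs f_F\,\sigma(F)=\int_F f\,\sigma$ and interchange sums to get $\Lambda^{1/p}=\int \big(\sum_{F\ni x} b_F\big) f(x)\,\sigma(x)\,dx$, where the inner sum is over the (nested) stopping cubes $F$ containing $x$.

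The core estimate is then $\sum_{F\ni x} b_F \le C\, M_\sigma^{\cd}\!\big((\sum_F b_F 1_{\ef F})\big)(x)$, i.e. the "sum over a chain" is controlled by a dyadic maximal function of a single function built from the $\ef F$. This follows because the cubes $F\in\cf$ containing a given $x$ form a chain $F_0\supsetneq F_1\supsetneq\cdots$ and, by the Carleson/packing property, for each such $F_j$ one has $\sum_{k\ge j} b_{F_k}\sigma(F_k)\le \big(\sum_{k\ge j} b_{F_k}^{p'}\sigma(F_k)\big)^{1/p'}\big(\sum_{k\ge j}\sigma(F_k)\big)^{1/p}$ — actually the cleanest route is: $\sum_{F\ni x} b_F\le \sum_{F\ni x} \avs{h}_F$ where $h:=\sum_F b_F 1_{\ef F}$, because $b_F\,\sigma(F)\le 2 b_F\,\sigma(\ef F)=2\int_{\ef F}h\le 2\int_F h$, so $b_F\le 2\avs h_F$; and a geometric-series/stopping argument using $\sigma(F_{j+1})\le \tfrac12\sigma(F_j)$ (which is what $\sigma(F)\le2\sigma(\ef F)$ gives after one generation) bounds $\sum_{F\ni x}\avs h_F$ by $C\,M_\sigma^{\cd}h(x)$. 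Plugging this in, $\Lambda^{1/p}\le C\int f\, M_\sigma^{\cd}h\,\sigma\le C\|f\|_{L^p(\sigma)}\|M_\sigma^{\cd}h\|_{L^{p'}(\sigma)}\le C p\,\|f\|_{L^p(\sigma)}\|h\|_{L^{p'}(\sigma)}$ by the dyadic maximal theorem, and finally $\|h\|_{L^{p'}(\sigma)}^{p'}=\sum_F b_F^{p'}\sigma(\ef F)\le\sum_F b_F^{p'}\sigma(F)\le1$, giving the claimed bound with an explicit constant of the form $2^{1/p}p'$ after tracking the constants carefully.

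The main obstacle is getting the constant exactly $2^{1/p}p'$ rather than merely $C_p$: the geometric-series step in $\sum_{F\ni x}\avs h_F\lesssim M_\sigma^{\cd}h(x)$ and the dyadic maximal inequality each contribute, and one must be careful that the stopping structure gives a genuine halving of $\sigma$-mass along the chain so the series telescopes with the right multiplicative factor; alternatively one bypasses the maximal function entirely and argues directly with the disjointness of $\{\ef F\}$ and Hölder, which is the sharpest and is presumably what the paper does. I would present the direct argument: fix $x$, let $F_0\supsetneq F_1\supsetneq\cdots$ be the stopping cubes containing $x$, note $\sum_{j} b_{F_j}\avs f_{F_j}\sigma(F_j)$ summed over $x\in\ef{F_j}$ reassembles $\Lambda^{1/p}$, and on each $\ef F$ estimate $\avs f_F$ by the $\sigma$-dyadic maximal function $M_\sigma^{\cd}f$, so that $\Lambda^{1/p}\le \sum_F b_F\,\sigma(F)\,\inf_{\ef F}M_\sigma^{\cd}f\le 2\sum_F b_F\int_{\ef F} M_\sigma^{\cd}f\,\sigma\le 2\|(b_F)\|_{\ell^{p'}(\sigma)}\|M_\sigma^{\cd}f\|_{L^{p'}(\sigma)}$ — no wait, this over-counts; the honest reduction keeps $\avs f_F$ inside and dualizes as above. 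Either way the one genuinely delicate point is the constant, and I would conclude by invoking the dyadic maximal function bound $\|M_\sigma^{\cd}h\|_{L^{p'}(\sigma)}\le p\,\|h\|_{L^{p'}(\sigma)}$ and optimizing the remaining factors of $2$ coming from $\sigma(F)\le 2\sigma(\ef F)$.
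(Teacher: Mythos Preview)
Your proposal takes a substantial detour and contains a genuine gap. The paper's proof is three lines: observe $\langle|f|\rangle^\sigma_F\le \inf_F M^\sigma f$, use $\sigma(F)\le 2\sigma(E_{\mathcal F}(F))$ and the pairwise disjointness of $\{E_{\mathcal F}(F)\}$ to get
\[
\sum_{F\in\mathcal F}(\langle|f|\rangle^\sigma_F)^p\sigma(F)\le 2\sum_{F\in\mathcal F}\int_{E_{\mathcal F}(F)}(M^\sigma f)^p\,\sigma\le 2\|M^\sigma f\|_{L^p(\sigma)}^p\le 2(p')^p\|f\|_{L^p(\sigma)}^p,
\]
and take $p$-th roots. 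No dualization, no chain arguments; the constant $2^{1/p}p'$ falls out immediately.

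The gap in your primary approach is the pointwise bound $\sum_{F\ni x} b_F\le C\,M^\sigma_{\mathcal D}h(x)$ with $h=\sum_F b_F 1_{E_{\mathcal F}(F)}$: this is \emph{false} under the hypotheses of the lemma. Take a chain $F_0\supsetneq\cdots\supsetneq F_N$ with $\sigma(F_j)=2^{-j}$, let $E_{\mathcal F}(F_j)$ be pairwise disjoint with $\sigma(E_{\mathcal F}(F_j))=\tfrac12\sigma(F_j)$ for $j<N$ and $E_{\mathcal F}(F_N)=F_N$, and set all $b_{F_j}=1$. Then $h\equiv 1$ $\sigma$-a.e., so $M^\sigma h\equiv 1$, yet $\sum_{F\ni x}b_F=N+1$ for $x\in F_N$. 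The ``geometric-series/stopping argument'' you invoke does not rescue this: even with $\sigma(F_{j+1})\le\tfrac12\sigma(F_j)$ holding, the averages $\langle h\rangle^\sigma_{F_j}$ need not decay along the chain (in the example they are all equal to $1$). The lemma as stated also does not assume any stopping structure on $\mathcal F$, so your parenthetical ``which is what $\sigma(F)\le2\sigma(E_{\mathcal F}(F))$ gives after one generation'' is unjustified in this generality.

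You do gesture at the correct idea (``bypasses the maximal function entirely and argues directly with the disjointness''), and your last paragraph almost lands on the paper's argument before you retract it with ``no wait, this over-counts.'' It does not over-count: replacing $\langle f\rangle^\sigma_F$ by $\inf_{E_{\mathcal F}(F)}M^\sigma f$ and then $\sigma(F)$ by $2\sigma(E_{\mathcal F}(F))$ gives a sum of integrals over the \emph{disjoint} sets $E_{\mathcal F}(F)$, which is bounded by a single integral over their union. The dual sequence $(b_F)$ is entirely unnecessary; working directly with the $p$-th powers is what produces $2^{1/p}$ rather than the factor $2$ your dualized route would give.
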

\begin{proof}[Proof of the lemma]
By the definition of the Hardy-Littlewood maximal function we have $\avs{\abs{f}}_{F}\leq \inf_{F}M^\sigma f$. Moreover we have the norm inequality $\norm{M^\sigma f}_{L^p(\sigma)}\leq p' \norm{f}_{L^p(\sigma)}$. These facts together with the assumptions
yield $$
(\sum_{F\in\cf}(\avs{\abs{f}}_{F})^p\sigma(F))^{1/p}\leq 2^{1/p}(\sum_{F\in\cf}\int_{\ef{F}}(\inf_{F} M^\sigma f ) \sigma)^{1/p}\leq 2^{1/p} \norm{M^\sigma f}_{L^p(\sigma)}\leq 2^{1/p}p'\norm{f}_{L^p(\sigma)}.
$$
\end{proof}
\subsection{Applying the dual testing condition}\label{applyingthedualtestingcondition}
Let us first consider the summation \eqref{firstsummation}. Assume for the moment that we may replace $f$ in the summation \eqref{firstsummation} with functions $f_G$ that satisfy 
\begin{equation}\label{fgclaim}
(\sum_{G\in\cg}\norm{f_G}_{L^p(\sigma)}^p)^{1/p}\leq 5p'\norm{f}_{L^p(\sigma)}.
\end{equation}
Then we have
\begin{align*}
&\sum_{G\in\cg}\sum_{\substack{F\in\cf\\F\subseteq G}}\sum_{\substack{Q\in\cd  \\\pi(Q)=(F,G)}} \lambda_Q\av{g\omega}_Q\int_Qf_G\sigma &&\\
&\leq \sum_{G\in\cg}\int (\sum_{\substack{Q\in\cd:\\\pig{Q}=G}} \lambda_Q\av{g_Q\omega}_Q1_Q) f_G\sigma&&\text{by relaxing the summation condition}\\
&= \sum_{G\in\cg}\int (\sum_{Q\in\cd} \lambda_Q\av{(g_G)_Q\omega}_Q1_Q) f_G\sigma&&\text{by defining $g_G:=(g_Q)_{{\begin{subarray}{l}Q\in\cd:\\\pig{Q}=G\end{subarray}}}$}\\
&=\sum_{G\in\cg} \pairs{f_G}{T^*_G(g_G\;\omega)}&&\\
&\leq \sum_{G\in\cg} \norm{f_G}_{L^p(\sigma)}\norm{T^*_G(g_G\;\omega)}_{L^{p'}(\sigma)}&&\text{by H\"older's inequality}\\
&\leq C^* \sum_{G\in\cg} \norm{f_G}_{L^p(\sigma)} \norm{g_G}_{L^{\infty}_{\ell^{r'}}(\omega)}\,\omega(G)^{1/{p'}}&&\text{by the testing condition \eqref{dualtesting}}\\
&\leq 2 C^* \sum_{G\in\cg}\norm{f_G}_{L^p(\sigma)}\avon{g}_G\omega(G)^{1/{p'}}&&\text{by the property (b5)}\\
&\leq 2C^* (\sum_{G\in\cg}\norm{f_G}_{L^p(\sigma)}^p)^{1/p}(\sum_{G\in\cg}(\avon{g}_G)^{p'}\omega(G))^{1/{p'}}&&\text{by H\"older's inequality}\\
&\leq 2^{1+1/{p'}}pC^* (\sum_{G\in\cg}\norm{f_G}_{L^p(\sigma)}^p)^{1/p}\norm{g}_{L^{p'}_{\ell^{r'}}(\omega)}&&\text{by Lemma \ref{carlesonembedding}}\\
&\leq 4pC^* 5p' \norm{f}_{L^p(\sigma)}\norm{g}_{L^{p'}_{\ell^{r'}}(\omega)}&&\text{by the claimed inequality \eqref{fgclaim}}.
\end{align*}
Next we prove that we may replace $f$ in the summation \eqref{firstsummation} with functions $f_G$ that satisfy the claimed inequality \eqref{fgclaim}.
\begin{claim}
In the summation \eqref{firstsummation} we may replace $f$ with functions $f_G$ that satisfy 
\begin{equation*}
(\sum_{G\in\cg}\norm{f_G}_{L^p(\sigma)}^p)^{1/p}\leq 5p'\norm{f}_{L^p(\sigma)}.
\end{equation*}
\end{claim}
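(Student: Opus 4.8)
The plan is to produce, for each $G\in\cg$, a function $f_G$ that is constant on a corona decomposition of $G$ adapted to the $\cf$-stopping cubes lying directly inside $G$, and that is just large enough to dominate $f$ against the cubes $Q$ surviving in the $G$-th summand of \eqref{firstsummation}. First I would introduce the \emph{$G$-corona} $\cf(G):=\{F\in\cf:\pig{F}=G\}$ and set
$$
D_G(F):=F\setminus\bigcup\{F'\in\chf{F}:\pig{F'}=G\},\qquad
f_G:=2\sum_{F\in\cf(G)}\avfs_F\,1_{D_G(F)}.
$$
The structural facts I would record are: (i) $\cf(G)$ is order-convex in the $\cf$-tree, since $F_1\subseteq F\subseteq F_2$ with $F_1,F_2\in\cf(G)$, $F\in\cf$, forces $G=\pig{F_1}\subseteq\pig{F}\subseteq\pig{F_2}=G$; (ii) by convexity the sets $\{D_G(F)\}_{F\in\cf(G)}$ are pairwise disjoint and partition $U_G:=\bigcup_{F\in\cf(G)}F$, with $f_G(x)=2\avfs_{F_G(x)}$ on $U_G$, where $F_G(x)$ is the minimal cube of $\cf(G)$ containing $x$; and (iii) by property (a1), $D_G(F)=\ef{F}\cup\bigcup\{F'\in\chf{F}:\pig{F'}\subsetneq G\}$ (disjointly).

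Next I would verify the replacement property. If $Q$ occurs in the $G$-th summand of \eqref{firstsummation}, i.e. $\pif{Q}=F$ and $\pig{Q}=G$ with $F\subseteq G$, then $\pig{F}=G$: the inclusion $F\subseteq G$ forces $\pig{F}\subseteq G$, while $F\supseteq Q$ forces $\pig{F}\supseteq\pig{Q}=G$. Hence $F\in\cf(G)$ and $Q\subseteq F$, so $f_G\geq 2\avfs_F$ on $Q$, and by property (a4)
$$
\int_Q f\,\sigma=\avfs_Q\,\sigma(Q)\leq 2\avfs_F\,\sigma(Q)\leq\int_Q f_G\,\sigma .
$$
Since every other factor in the $G$-th summand is nonnegative, this gives termwise the desired replacement of $f$ by $f_G$ in \eqref{firstsummation}.

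The heart of the matter is the norm bound. Because every $F\in\cf$ has a unique $\pig{F}$, the corona families $\{\cf(G)\}_{G\in\cg}$ partition $\cf$, so using the disjointness of the $D_G(F)$ together with (ii) and (iii),
$$
\sum_{G\in\cg}\norm{f_G}_{L^p(\sigma)}^p
=2^p\sum_{F\in\cf}(\avfs_F)^p\,\sigma(\ef{F})
\;+\;2^p\sum_{F\in\cf}\sum_{\substack{F'\in\chf{F}\\ \pig{F'}\subsetneq\pig{F}}}(\avfs_F)^p\,\sigma(F').
$$
For the first sum I would use $\avfs_F\leq\inf_F M^\sigma f$, the disjointness (a2) of $\{\ef{F}\}_{F\in\cf}$, and $\norm{M^\sigma f}_{L^p(\sigma)}\leq p'\norm{f}_{L^p(\sigma)}$, which gives the bound $2^p(p')^p\norm{f}_{L^p(\sigma)}^p$. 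For the second sum I would invoke the stopping inequality \eqref{stoppingconditionforf} in the form $\avfs_F<\tfrac12\avfs_{F'}$ for $F'\in\chf{F}$ and property (a3) in the form $\sigma(F')\leq 2\sigma(\ef{F'})$, so that each $(F,F')$-term is $<2^{1-p}(\avfs_{F'})^p\sigma(\ef{F'})$; since each $F'$ has a unique $\cf$-parent and the sets $\ef{F'}$ remain pairwise disjoint, the same maximal-function estimate bounds the second sum by $2(p')^p\norm{f}_{L^p(\sigma)}^p$. Altogether $\sum_{G\in\cg}\norm{f_G}_{L^p(\sigma)}^p\leq(2^p+2)(p')^p\norm{f}_{L^p(\sigma)}^p$, and since $4^p-2^p-2=(2^p-2)(2^p+1)\geq0$ for $p\geq1$ this yields $(\sum_{G\in\cg}\norm{f_G}_{L^p(\sigma)}^p)^{1/p}\leq 4p'\norm{f}_{L^p(\sigma)}\leq 5p'\norm{f}_{L^p(\sigma)}$.

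The obstacle to watch for is precisely this bookkeeping, not any single inequality. The naive choice of $f_G$ as the full $\cf$-stopping function of $f$ on all of $G$ fails, because then a single stopping cube $F$ would be charged to every $G\in\cg$ lying above it, and those norms would sum to something uncontrollable. The resolution — that $f_G$ only has to dominate $f$ against the cubes surviving in the $G$-th summand, which forces $\pig{F}=G$ and hence charges each $F$ to exactly one $G$ — is the whole point; and within each $G$ one must still use the order-convexity of $\cf(G)$ to identify the ``cut'' children $\{F'\in\chf{F}:\pig{F'}\subsetneq\pig{F}\}$ as exactly the remaining mass of $f_G$, and then reabsorb them into the $\cf$-stopping sum via \eqref{stoppingconditionforf} without double-counting the disjoint sets $\ef{F'}$.
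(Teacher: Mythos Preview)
Your proof is correct, but it constructs $f_G$ differently from the paper. The paper partitions $G$ using the $\cg$-children: it sets
$$
f_G:=f\,1_{\eg{G}}+\sum_{G'\in\chsg{G}}\avfs_{G'}\,1_{G'},
$$
where $\chsg{G}$ consists of those $G'\in\chg{G}$ that lie inside some $F\in\cf$ with $F\subseteq G$, and then controls the averaged piece via property (a4) (pushing $\avfs_{G'}$ up to $\avfs_{\pif{G'}}$) together with the pairwise disjointness of $\chg{G}$ inside each $F$. Your construction is dual in spirit: you partition $U_G$ by the $\cf$-corona pieces $D_G(F)$ for $F\in\cf(G)$, build $f_G$ directly as a step function at the $\cf$-stopping averages, and then reabsorb the ``cut'' children $F'\in\chf{F}$ with $\pig{F'}\subsetneq\pig{F}$ using the stopping gain $\avfs_F<\tfrac12\avfs_{F'}$. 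Both arguments hinge on the same bookkeeping principle (each $F\in\cf$ is charged to exactly one $G$), and both close with the Carleson embedding of Lemma~\ref{carlesonembedding}; your route avoids the ``raw'' piece $f\,1_{\eg{G}}$ and even yields the slightly sharper constant $4p'$ in place of $5p'$, at the cost of needing the monotonicity of $\avfs_F$ along $\cf$-chains to verify $f_G\geq 2\avfs_F$ on all of $Q$ (not just on $D_G(F)$), which you implicitly use and which follows from \eqref{stoppingconditionforf}.
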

\begin{proof}[Proof of the claim]
Since the summation condition $\pig{Q}=G$ implies that $Q\subseteq G$ and since the sets $G'\in\chg{G}$ and $\eg{G}$ partition $G$, we have 
$$
\int_Qf\sigma=\int_{Q\cap\eg{G}}f\sigma+\sum_{G'\in\chg{G}}\int_{Q\cap G'}f\sigma.
$$
We may suppose that $Q\cap G'\neq\emptyset$ because otherwise the integral over $Q\cap G'$ vanishes. Then either $G'\subsetneq Q$ or $Q\subseteq G'$, the latter which is excluded by the summation condition $\pig{Q}=G$. Hence we may restrict the summation index set $\{G'\in\chg{G}\}$ to the set $\{G'\in\chg{G} :G'\subsetneq Q\}$. Therefore 
$$
\int_{Q\cap G'}f\sigma=\int_{G'}f\sigma=\avs{f}_{G'}\sigma(G')=\int_Q \avs{f}_{G'}1_{G'}\sigma.
$$
The summation conditions $\pif{Q}=F$ and $F\subseteq G$ imply that $Q \subseteq F\subseteq G$. Therefore \begin{equation}\label{def_chsg}\begin{split}\{G'\in\chg{G} : G'\subsetneq Q\}&\subseteq\{G'\in\chg{G} : G'\subseteq F\subseteq G \text{ for some } F\in\cf\}\\
&=\bigcup_{\substack{F\in\cf:\\\pig{F}=G}} \{G'\in\chg{G} : \pif{G'}=F\}=:\chsg{G}.\end{split}\end{equation}
Altogether we have
$$
\int_Qf\sigma\leq \int_Q(f1_{\eg{G}}+\sum_{G'\in\chsg{G}} \avs{f}_{G'}1_{G'})\sigma=:\int_Q f_G\sigma.
$$
Next we check the claimed inequality \eqref{fgclaim}. 
By the triangle inequality we have
$$
\norm{f_G}_{L^p(\sigma)}\leq \norm{f1_{\eg{G}}}_{L^p(\sigma)}+\norm{\sum_{G'\in\chsg{G}} \avs{f}_{G'}1_{G'}}_{L^p(\sigma)},
$$ which by the triangle inequality and by the pairwise disjointness of each of the collections $\{G'\}_{G'\in\chsg{G}}$ and $\{\eg{G}\}_{G\in\cg}$ implies that
\begin{equation*}
\begin{split}
(\sum_{G\in\cg}\norm{f_G}_{L^p(\sigma)}^p)^{1/p}&\leq (\sum_{G\in\cg}\norm{f1_{\eg{G}}}_{L^p(\sigma)}^p)^{1/p}+
(\sum_{G\in\cg}\norm{\sum_{G'\in\chsg{G}} \avs{f}_{G'}1_{G'}}_{L^p(\sigma)}^p)^{1/p}\\
&\leq (\norm{\sum_{G\in\cg}f1_{\eg{G}}}_{L^p(\sigma)}^p)^{1/p}+
(\sum_{G\in\cg}\sum_{G'\in\chsg{G}}\norm{ \avs{f}_{G'}1_{G'}}_{L^p(\sigma)}^p)^{1/p}\\
&\leq \norm{f}_{L^p(\sigma)}+
(\sum_{G\in\cg}\sum_{G'\in\chsg{G}}(\avs{f}_{G'})^p\sigma(G'))^{1/p}.
\end{split}
\end{equation*}
We can estimate the last term as follows.
\begin{align*}
&\sum_{G\in\cg}\sum_{G'\in\chsg{G}} (\avs{f}_{G'})^{p}\sigma(G') &&\\
&= \sum_{G\in\cg}\sum_{\substack{F\in\cf:\\\pig{F}=G}}\sum_{\substack{G'\in\chg{G}:\\\pif{G'}=F}} (\avs{f}_{G'})^{p}\sigma(G')&&\text{by \eqref{def_chsg}}\\
&\leq \sum_{G\in\cg}\sum_{\substack{F\in\cf:\\\pig{F}=G}}2^p (\avs{f}_{F})^{p}(\sum_{\substack{G'\in\chg{G}:\\\pif{G'}=F}} \sigma(G'))&&\text{by the property (a4)}\\
&\leq 2^p \sum_{G\in\cg}\sum_{\substack{F\in\cf:\\\pig{F}=G}}(\avs{f}_{F})^{p}\sigma(F)&&\text{because $\chg{G}$ is pairwise disjoint}\\
&= 2^p\sum_{F\in\cf} (\avs{f}_{F})^{p}\sigma(F)&&\text{because $\cf=\bigcup_{G\in\cg}\{F\in\cf : \pig{F}=G\}$ }\\
&\leq 2^{p+1} (p')^p\norm{f}_{L^p(\sigma)}^{p}&&\text{by Lemma \ref{carlesonembedding}}.
\end{align*}
Altogether
$$
(\sum_{G\in\cg}\norm{f_G}_{L^p(\sigma)}^p)^{1/p}\leq \norm{f}_{L^p(\sigma)}+2^{1/p+1}p'\norm{f}_{L^p(\sigma)}\leq 5p'\norm{f}_{L^p(\sigma)}.
$$
This concludes the proof of the claim.
\end{proof}
\subsection{Applying the direct testing condition}\label{applyingthedirecttestingcondition}

Next we estimate the summation \eqref{secondsummation}. 
\begin{align*}
&\sum_{F\in\cf}\sum_{\substack{G\in\cg\\G\subseteq F}}\sum_{\substack{Q\in\cd \\ \pi(Q)=(F,G)}} \lambda_Q\avs{f}_Q\av{\sigma}_Q\int_Q g_Q\omega &&\\
&\leq 2 \sum_{F\in\cf}\avs{f}_F\sum_{\substack{G\in\cg\\G\subseteq F}}\sum_{\substack{Q\in\cd \\ \pi(Q)=(F,G)}} \lambda_Q\av{\sigma}_Q\int_Q g_Q\omega&& \text{by the property (a4)}\\
&= 2 \sum_{F\in\cf}\avs{f}_F \int \sum_{Q\in\cd}(T_F(\sigma))_Q (g_F)_{Q}\omega &&\text{by $g_F:=(g_Q)_{{\begin{subarray}{l}Q\in\cd :\\ \pi(Q)=(F,G) \text{ for some } \\G\in\cg \text{ such that } G\subseteq F\end{subarray}}}$}\\
&=2\sum_{F\in\cf}\avs{f}_F\pairo{T_F({\sigma})}{g_F}&&\\
&\leq 2 \sum_{F\in\cf}\avs{f}_F\norm{T_F({\sigma})}_{L^p_{\ell^r}(\omega)} \norm{g_F}_{L^{p'}_{\ell^{r'}}(\omega)}&&\text{by H\"older's inequality}\\
&\leq 2 C\sum_{F\in\cf}\avs{f}_F\sigma(F)^{1/p}\norm{g_F}_{L^{p'}_{\ell^{r'}}(\omega)}&&\text{by the testing condition \eqref{testing}}\\
&\leq 2 C(\sum_{F\in\cf}(\avs{f}_F)^p\sigma(F))^{1/p} (\sum_{F\in\cf}\norm{g_F}^{p'}_{L^{p'}_{\ell^{r'}}(\omega)})^{1/{p'}}&&\text{by H\"older's inequality}\\
&\leq 2 C 2 p' \norm{f}_{L^p(\sigma)}(\sum_{F\in\cf}\norm{g_F}^{p'}_{L^{p'}_{\ell^{r'}}(\omega)})^{1/{p'}}&&\text{by Lemma \ref{carlesonembedding}}.
\end{align*}
The proof of the following claim completes the proof of the theorem.
\begin{claim}
We have 
\begin{equation}\label{claimforgf}
(\sum_{F\in\cf}\norm{g_F}^{p'}_{L^{p'}_{\ell^{r'}}(\omega)})^{1/{p'}}\leq 5p\norm{g}_{L^{p'}_{\ell^{r'}}(\omega)}.
\end{equation}
\end{claim}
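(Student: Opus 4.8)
The plan is to bound $\sum_{F\in\cf}\norm{g_F}_{L^{p'}_{\ell^{r'}}(\omega)}^{p'}$ by running, on the $(g,\omega)$-side, the same sort of estimate that was just carried out for the $(f,\sigma)$-side, but being careful that now it is $\cg$, not $\cf$, that carries the Carleson property we need. The starting observation is that, since $\pif{Q}$ and $\pig{Q}$ are determined by $Q$, the index sets $\{Q\in\cd:\pi(Q)=(F,G)\text{ for some }G\in\cg\text{ with }G\subseteq F\}$ are pairwise disjoint as $F$ ranges over $\cf$, and their union is contained in $\cd$. From here I would split into two cases, according to whether $r'\le p'$ or $p'\le r'$.

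If $r'\le p'$, this disjointness alone suffices. Fix $x$ and set $t_F:=\normrp{g_F(x)}^{r'}$; these are nonnegative and $\sum_{F}t_F\le\normrp{g(x)}^{r'}$ by the disjointness of the index sets. Since $p'/r'\ge 1$ (reading $p'/r'$ as $p'\ge 1$ when $r'=1$) we have $\sum_{F}t_F^{p'/r'}\le(\sum_{F}t_F)^{p'/r'}$, so $\sum_{F}\normrp{g_F(x)}^{p'}\le\normrp{g(x)}^{p'}$; integrating against $\omega$ yields $\sum_{F}\norm{g_F}_{L^{p'}_{\ell^{r'}}(\omega)}^{p'}\le\norm{g}_{L^{p'}_{\ell^{r'}}(\omega)}^{p'}$, which is stronger than \eqref{claimforgf} demands.

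If $p'\le r'$ — the case that genuinely forces a Carleson embedding — I would instead decompose $g_F$ along the $\cg$-cubes: write $g_F$ as the concatenation $g_F=\bigsqcup_{G\in\cg:\,G\subseteq F}h_{F,G}$, where $h_{F,G}:=(g_Q)_{Q\in\cd:\,\pi(Q)=(F,G)}$ is the subsequence of $(g_Q)_{Q:\pig{Q}=G}$ picking out those $Q$ with $\pif{Q}=F$. Each $h_{F,G}$ is supported in $G$, and by property (b5) it satisfies $\normrp{h_{F,G}(x)}\le\normrp{(g_Q1_Q(x))_{Q:\pig{Q}=G}}\le 2\avgg$ for $\omega$-almost every $x$; moreover $h_{F,G}$ vanishes identically unless $F=\pif{G}$, because $\pi(Q)=(F,G)$ with $G\subseteq F$ forces $Q\subseteq G\subseteq F$ with $F$ the minimal cube of $\cf$ above $G$. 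Since $g_F(x)$ is the concatenation of the $h_{F,G}(x)$, the monotonicity of $\ell^{q}$-norms in $q$ (here $p'\le r'$) gives $\normrp{g_F(x)}=\bigl\|(\normrp{h_{F,G}(x)})_{G\subseteq F}\bigr\|_{\ell^{r'}}\le\bigl\|(\normrp{h_{F,G}(x)})_{G\subseteq F}\bigr\|_{\ell^{p'}}$, hence $\normrp{g_F(x)}^{p'}\le\sum_{G\subseteq F}\normrp{h_{F,G}(x)}^{p'}$. Integrating and using the bounds on $h_{F,G}$, $\norm{g_F}_{L^{p'}_{\ell^{r'}}(\omega)}^{p'}\le\sum_{G\in\cg:\,\pif{G}=F}(2\avgg)^{p'}\omega(G)$, and summing over $F\in\cf$ collapses the double sum to $\sum_{G\in\cg}(2\avgg)^{p'}\omega(G)$.

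Finally I would apply Lemma \ref{carlesonembedding} with the measure $\omega$, the exponent $p'$, the function $\normrp{g}$, and the collection $\{\eg{G}\}_{G\in\cg}$, which is pairwise disjoint by (b2) and satisfies $\omega(G)\le 2\omega(\eg{G})$ by (b3); this gives $\bigl(\sum_{G\in\cg}(\avgg)^{p'}\omega(G)\bigr)^{1/p'}\le 2^{1/p'}p\norm{g}_{L^{p'}_{\ell^{r'}}(\omega)}$, and therefore $\bigl(\sum_{F}\norm{g_F}_{L^{p'}_{\ell^{r'}}(\omega)}^{p'}\bigr)^{1/p'}\le 2\cdot 2^{1/p'}p\,\norm{g}_{L^{p'}_{\ell^{r'}}(\omega)}\le 5p\,\norm{g}_{L^{p'}_{\ell^{r'}}(\omega)}$, which proves \eqref{claimforgf} and completes the proof of the theorem. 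The one point to watch is that Stein's inequality — which earlier let us reduce to piecewise-constant $g$ — is of no use at this stage, so the summation must be routed through whichever stopping collection carries the Carleson property appropriate to the given range of $r$: this is $\cg$ (which is $\omega$-Carleson by (b3)) in the delicate case $p'\le r'$, mirroring the role of $\cf$ in the estimate for the $(f,\sigma)$-side.
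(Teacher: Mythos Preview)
Your argument is correct, but it proceeds quite differently from the paper's. The paper does not split on the relation between $p'$ and $r'$; instead it partitions each $F$ into $\ef{F}$ and the children $F'\in\chf{F}$, observes that on each $F'$ only the coordinates $Q\supsetneq F'$ survive (since $\pif{Q}=F$), and then invokes property (b4) to bound $\normrp{(a_Q)_{Q\supsetneq F'}}\le 2\avo{\normrp{g}}_{\pig{F'}}$, after which the double sum over $F$ and $F'\in\chsf{F}$ collapses to the same Carleson sum $\sum_{G\in\cg}(\avo{\normrp{g}}_G)^{p'}\omega(G)$. Your route bypasses the $\chf{F}$ machinery entirely: in the range $r'\le p'$ you exploit only the disjointness of the index sets $\ci(F)$ to get constant $1$, and in the range $p'\le r'$ you decompose $g_F$ directly along the $\cg$-cubes and use property (b5) rather than (b4). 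Both approaches land on the same Carleson embedding over $\cg$; yours is more elementary and yields a sharper constant when $r'\le p'$, while the paper's is uniform in $r$ and mirrors more exactly the structure of the companion claim on the $(f,\sigma)$-side.
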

\begin{proof}[Proof of the claim]
By definition the components of the function $g_F=(a_Q1_Q)_{Q\in\ci(F)}$ are indexed by the set
$$
\ci(F)=\{Q\in\cd : \pi(Q)=(F,G) \text{ for some } G\in\cg \text{ such that } G\subseteq F \}.
$$
The function $g_F$ is supported on $\bigcup_{Q\in\ci(F)}Q$. Since the condition $\pif{Q}=F$ implies that $Q\subseteq F$, we have that $\bigcup_{Q\in\ci(F)}Q\subseteq F$. Since the sets $F'\in\chf{F}$ and $\ef{F}$ partition $F$, we have 
$$
g_F= g_F1_{\ef{F}} +\sum_{F'\in\chf{F}}g_F1_{F'}.
$$
By the triangle inequality we have
$$
\norm{g_F}_{L^{p'}_{\ell^{r'}}(\omega)}\leq \norm{g_F1_{\ef{F}}}_{L^{p'}_{\ell^{r'}}(\omega)}+\norm{\sum_{F'\in\chf{F}}g_F1_{F'}}_{L^{p'}_{\ell^{r'}}(\omega)},
$$
which by the triangle inequality, by the fact $\normrp{g_F}\leq\normrp{g}$ and by the pairwise disjointness of each of the collections  $\{\ef{F}\}_{F\in\cf}$ and $\{F'\}_{F'\in\chf{F}}$  implies that
\begin{equation*}
\begin{split}
(\sum_{F\in\cf}\norm{g_F}^{p'}_{L^{p'}_{\ell^{r'}}(\omega)})^{1/{p'}}&\leq (\sum_{F\in\cf}\norm{g_F1_{\ef{F}}}^{p'}_{L^{p'}_{\ell^{r'}}(\omega)})^{1/{p'}}+(\sum_{F\in\cf}\norm{\sum_{F'\in\chf{F}}g_F1_{F'}}^{p'}_{L^{p'}_{\ell^{r'}}(\omega)})^{1/{p'}}\\
&\leq (\norm{\sum_{F\in\cf}g1_{\ef{F}}}^{p'}_{L^{p'}_{\ell^{r'}}(\omega)})^{1/{p'}}+(\sum_{F\in\cf}\sum_{F'\in\chf{F}}\norm{g_F1_{F'}}^{p'}_{L^{p'}_{\ell^{r'}}(\omega)})^{1/{p'}}\\
&\leq \norm{g}_{L^{p'}_{\ell^{r'}}(\omega)}+(\sum_{F\in\cf}\sum_{F'\in\chf{F}}\norm{g_F1_{F'}}^{p'}_{L^{p'}_{\ell^{r'}}(\omega)})^{1/{p'}}.
\end{split}
\end{equation*}
It remains to estimate the last term. Consider the integral
$$
\norm{g_F1_{F'}}^{p'}_{L^{p'}_{\ell^{r'}}(\omega)}=\int_{F'}\normrp{g_F}^{p'}=\begin{cases}\int_{F'} ((\sum_{Q\in\ci(F)} a_Q^{r'}1_Q(x))^{1/r'})^{p'}\omega(x) \mathrm{d}x& \text{ if } 1\leq r'<\infty,\\
\int_{F'}(\sup_{Q\in\ci(F)}(a_Q1_Q(x)))^{p'}\omega(x) \mathrm{d}x&\text{ if } r'=\infty.
\end{cases}
$$
Let $Q\in\ci(F)$ and $F'\in\chf{F}$. The cubes $Q$ and $F'$ for which $Q\cap F'=\emptyset$ do not contribute to the integral. Hence we may restrict to the cubes such that $Q\cap F'\neq\emptyset$. Then by nestedness either $F'\subsetneq Q$ or $Q\subseteq F'$, the latter which is excluded by the condition $\pif{Q}=F$. Hence $F'\subsetneq Q$. Moreover, we have that $\pig{Q}=G$ for some $G\subseteq F$, which implies that $Q\subseteq G\subseteq F$. Altogether we have $F'\subsetneq Q\subseteq G\subseteq F$. Therefore we may replace the summation over the index set $\chf{F}$ with the summation over the set
\begin{equation}
\begin{split}
\label{replace1}
\chsf{F}&=\{F'\in\chf{F} : F'\subseteq G\subseteq F \text{ for some } G\in\cg\}\\
&=\bigcup_{\substack{G\in\cg:\\\pif{G}=F}} \{F'\in\chf{F} : \pig{F'}=G\}.
\end{split}
\end{equation}
and we may replace the index set $\ci(F)$ with the index set
\begin{equation}
\label{replace2}
\ci(F,F'):=\{Q\in\cd : Q\supsetneq F' \text{ and } \pi(Q)=(F,G) \text{ for some } G\in\cg \text{ such that } G\subseteq F \}.
\end{equation}
By the containment $$\ci(F,F')\subseteq \{Q\in\cd : Q\supsetneq F'\}$$ 
and the property (b4) we have 
\begin{equation}
\label{estimateab}
\normrp{(a_Q)_{Q\in\ci(F,F')}}\leq \normrp{(a_Q)_{\begin{subarray}{l}Q\in\cd:\\Q\supsetneq F'\end{subarray}}}\leq 2 \avo{\normrp{g}}_{\pig{F'}}.
\end{equation} 
Therefore 
\begin{align*}
&\sum_{F\in\cf}\sum_{F'\in\chf{F}}\norm{g_F1_{F'}}^{p'}_{L^{p'}_{\ell^{r'}}(\omega)} &&\\
&=\sum_{F\in\cf}\sum_{F'\in\chsf{F}}\norm{(a_Q)_{Q\in\ci(F,F')}1_{F'}}^{p'}_{L^{p'}_{\ell^{r'}}(\omega)} &&\text{the replacements \eqref{replace1} and \eqref{replace2}}\\
&\leq \sum_{F\in\cf}
\sum_{F'\in\chsf{F}}2^{p'} (\avo{\normrp{g}}_{\pig{F'}})^{p'}\omega(F')&&\text{by \eqref{estimateab}}\\
&\leq \sum_{F\in\cf}
\sum_{\substack{G\in\cg\\\pif{G}=F}} \sum_{\substack{F'\in\chf{F}\\\pig{F'}=G}}2^{p'} (\avo{\normrp{g}}_{\pig{F'}})^{p'}\omega(F')&&\text{by \eqref{replace1}}\\
&= 2^{p'}\sum_{F\in\cf}
\sum_{\substack{G\in\cg\\\pif{G}=F}}  (\avo{\normrp{g}}_{G})^{p'}(\sum_{\substack{F'\in\chf{F}\\\pig{F'}=G}}\omega(F'))\\
&\leq {2^{p'}}\sum_{F\in\cf}
\sum_{\substack{G\in\cg\\\pif{G}=F}}  (\avo{\normrp{g}}_{G})^{p'}\omega(G)&&\text{because $\chf{F}$ is pairwise disjoint}\\
&= {2^{p'}}\sum_{G\in\cg}(\avo{\normrp{g}}_{G})^{p'}\omega(G)&&\text{because $\cg=\bigcup_{F\in\cf}\{G\in\cg : \pif{G}=F\}$}\\
&\leq 2^{p'+1}p^{p'}\norm{g}^{p'}_{L^{p'}_{\ell^{r'}}(\omega)}&&\text{by Lemma \ref{carlesonembedding}}.
\end{align*}
Altogether 
$$
(\sum_{F\in\cf}\norm{g_F}^{p'}_{L^{p'}_{\ell^{r'}}(\omega)})^{1/{p'}}\leq \norm{g}_{L^{p'}_{\ell^{r'}}(\omega)}+2^{1/{p'}+1}p \norm{g}_{L^{p'}_{\ell^{r'}}(\omega)}\leq 5p\norm{g}_{L^{p'}_{\ell^{r'}}(\omega)}.
$$
This completes the proof of the claim.
\end{proof}

\end{proof}
\begin{remark} In fact each of the proofs \cite{lacey2009,treil2012,hytonen2012} for $r=1$, the proof \cite{scurry2010} for $1<r<\infty$, and the proof \cite{sawyer1982} for $r=\infty$ each works in the case $T(\,\cdot\,\omega):L^p(\sigma)\to L^q_{\ell^r}(\omega)$ with $1<p\leq q<\infty$. Also the proof of this note works in that case by using the following facts. For $p'\geq q'$ the estimate $\norm{\,\cdot\,}_{\ell^{p'}}\leq \norm{\,\cdot\,}_{\ell^{q'}}$  implies that
$$
(\sum_{G\in\cg}(\avon{g}_G)^{p'}\omega(G))^{1/{p'}}\leq (\sum_{G\in\cg}(\avon{g}_G)^{q'}\omega(G))^{1/{q'}}$$
and that 
 $$
(\sum_{F\in\cf}\norm{g_F}^{p'}_{L^{q'}_{\ell^{r'}}(\omega)})^{1/{p'}}\leq (\sum_{F\in\cf}\norm{g_F}^{q'}_{L^{q'}_{\ell^{r'}}(\omega)})^{1/{q'}}.
$$
Moreover, the estimate \eqref{claimforgf} holds for every $p'$, hence in particular for $q'$, as it is seen from the proof of the estimate. 
\end{remark}
\subsection*{Acknowledgment}
The author would like to thank Tuomas Hyt\"onen for suggesting the author to study whether Hyt\"onen's proof \cite[Section 6]{hytonen2012} of the case $r=1$ could be adapted to the case $1<r<\infty$, for pointing out how to simplify certain details of the proof, and for  encouraging and insightful discussions.

\bibliographystyle{plain}
\bibliography{references_two_weights}

\begin{thebibliography}{1}

\bibitem{hytonen2012}
Tuomas~P. Hyt\"onen.
\newblock The ${A}_2$ theorem: remarks and complements. {P}reprint.
\newblock 2012.
\newblock arXiv:1212.3840 [math.CA].

\bibitem{lacey2012}
Michael~T. Lacey, Eric~T. Sawyer, Chun-Yen Shen, and Ignacio Uriarte-Tuero.
\newblock Two {W}eight {I}nequality for the {H}ilbert {T}ransform: {A} {R}eal
  {V}ariable {C}haracterization, {I}.
\newblock 2012.
\newblock arXiv:1201.4319 [math.CA].

\bibitem{lacey2009}
Michael~T. Lacey, Eric~T. Sawyer, and Ignacio Uriarte-Tuero.
\newblock {T}wo {W}eight {I}nequalities for {D}iscrete {P}ositive {O}perators.
  {S}ubmitted.
\newblock 2009.
\newblock arXiv:0911.3437 [math.CA].

\bibitem{nazarov1999}
F.~Nazarov, S.~Treil, and A.~Volberg.
\newblock The {B}ellman functions and two-weight inequalities for {H}aar
  multipliers.
\newblock {\em J. Amer. Math. Soc.}, 12(4):909--928, 1999.

\bibitem{sawyer1982}
Eric~T. Sawyer.
\newblock A characterization of a two-weight norm inequality for maximal
  operators.
\newblock {\em Studia Math.}, 75(1):1--11, 1982.

\bibitem{sawyer1988}
Eric~T. Sawyer.
\newblock A characterization of two weight norm inequalities for fractional and
  {P}oisson integrals.
\newblock {\em Trans. Amer. Math. Soc.}, 308(2):533--545, 1988.

\bibitem{scurry2010}
James Scurry.
\newblock A {C}haracterization of {T}wo-{W}eight {I}nequalities for a
  {V}ector-{V}alued {O}perator. {P}reprint.
\newblock 2010.
\newblock arXiv:1007.3089 [math.CA].

\bibitem{treil2012}
Sergei Treil.
\newblock A remark on two weight estimates for positive dyadic operators.
  {P}reprint.
\newblock 2012.
\newblock arXiv:1201.1455 [math.CA].

\end{thebibliography}
\end{document}